\newtheorem{theorem}{Theorem}[section]
\newtheorem{lemma}[theorem]{Lemma}
\newtheorem{proposition}[theorem]{Proposition}
\newtheorem{corollary}[theorem]{Corollary}
\newtheorem{fact}[theorem]{Fact}
\newtheorem*{claim}{Claim}
\newtheorem{definition}[theorem]{Definition}
\newtheorem*{notation}{Notation}
\theoremstyle{remark}
\newtheorem{remark}[theorem]{Remark}
\newtheorem{remarks}[theorem]{Remarks}
\newcommand{\bl}{\begin{lemma}}
\newcommand{\el}{\end{lemma}}
\newcommand{\bfa}{\begin{fact}}
\newcommand{\efa}{\end{fact}}
\newcommand{\bpr}{\begin{proposition}}
\newcommand{\epr}{\end{proposition}}
\newcommand{\bp}{\begin{proof}}
\newcommand{\ep}{\end{proof}}
\newcommand{\bd}{\begin{definition}}
\newcommand{\ed}{\end{definition}}
\newcommand{\bt}{\begin{theorem}}
\newcommand{\et}{\end{theorem}}
\newcommand{\bc}{\begin{corollary}}
\newcommand{\ec}{\end{corollary}}
\newcommand{\bn}{\begin{notation}}
\newcommand{\en}{\end{notation}}
\newcommand{\br}{\begin{remark}}
\newcommand{\er}{\end{remark}}
\newcommand{\bcl}{\begin{claim}}
\newcommand{\ecl}{\end{claim}}
\newcommand{\vvert}[1][\cdot]{\vert #1\vert}
\newcommand{\N}{{\mathbb{N}}}
\newcommand{\R}{{\mathbb{R}}}
\newcommand{\nrm}[1]{\|#1\|}
\newcommand{\al}{\alpha}
\newcommand{\e}{\varepsilon}
\newcommand{\bnum}{\begin{enumerate}}
\newcommand{\enum}{\end{enumerate}}
\newcommand{\mc}{\mathcal}
\newcommand{\mt}{\mc{T}}
\newcommand{\ms}{\mc{S}}
\newcommand{\fa}{f_{\alpha}}
\newcommand{\fb}{f_{\beta}}
\newcommand{\Q}{\mathbb{Q}}
\numberwithin{subsection}{section}
\numberwithin{equation}{section}
\newcommand{\norm}[1][\cdot]{\lVert #1\rVert}
\DeclareMathOperator{\supp}{supp}
\DeclareMathOperator{\maxsupp}{maxsupp}
\DeclareMathOperator{\minsupp}{minsupp}
\DeclareMathOperator{\suc}{succ}
\DeclareMathOperator{\ord}{ord}
\DeclareMathOperator{\conv}{conv}
\begin{document}
\title{Strictly singular non-compact operators\\ on a class of HI spaces}
\author{Antonis Manoussakis}
\address{Department of Sciences, Technical University of Crete, Greece}
\email{amanouss@aegean.gr}
\author{Anna Pelczar-Barwacz}
\address{Institute of Mathematics, Faculty of Mathematics and Computer Science, Jagiellonian University, {\L}ojasiewicza 6, 30-348 Krak\'ow, Poland}
\email{anna.pelczar@im.uj.edu.pl}
\begin{abstract}
We present a method for constructing bounded strictly singular non-compact operators on mixed Tsirelson spaces defined either by the families $(\mathcal{A}_n)$ or $(\mathcal{S}_n)$ of a certain class, as well as on spaces built on them, including hereditarily indecomposable spaces.
\end{abstract}
\maketitle
\section*{Introduction}
W.T. Gowers and B. Maurey, solving the unconditional problem in \cite{GM}, introduced the notion of an hereditarily indecomposable (HI) Banach space. Recall that a Banach space X is indecomposable if it cannot be written as a direct sum of its two infinitely dimensional closed subspaces, and is hereditarily indecomposable if every closed subspace is indecomposable.  The main feature of a complex HI Banach  space, shared also by Gowers-Maurey space, is that every bounded operator on the space is a strictly singular perturbation of a multiple of the identity. S.A. Argyros and A.  Tolias, \cite{Atol} proved that in any HI Banach space every bounded operator is either strictly singular or has finitely dimensional kernel and restricted to the complement of its kernel is an isomorphism. The results on the structure of the operators on HI spaces  brought the attention  to the "scalar-plus-compact" problem stated by J. Lindenstrauss \cite{Lin}, who asked if there exists a Banach space on which every bounded operator is a compact perturbation of a multiple of the identity. The prime candidate to be considered in this context was Gowers-Maurey space $X_{GM}$. However, after W.T. Gowers, \cite{gow2} who built a bounded strictly singular non-compact operator on a subspace of  $X_{GM}$, G. Androulakis and  Th. Schlumprecht, \cite{as} constructed a strictly singular non-compact operator  on the  whole space $X_{GM}$. I. Gasparis \cite{gasp} achieved the same in Argyros-Deliyanni space $X_{AD}$ of \cite{ad2}. Finally, the  "scalar-plus-compact" problem  was solved recently by S.A. Argyros and R. Haydon, \cite{AH}, who gave a method to construct an $\mathscr{L}_\infty$ HI Banach spaces with the  "scalar-plus-compact" property. The existence of  a reflexive  Banach space with the "scalar-plus-compact" property remains an open problem. In this context a natural question concerns the existence of a  non-trivial strictly singular operators on mixed Tsirelson spaces and HI spaces who have these spaces as  a frame. Recall here that non-trivial strictly singular operators were constructed also on subspaces of mixed Tsirelson spaces and spaces built on them \cite{aost, kmp, p}. 

In the present paper we present a method for constructing bounded strictly singular non-compact operators in a class of mixed Tsirelson spaces defined either by the families $\mathcal{A}_{n}$ or Schreier families $\mathcal{S}_{n}$, as well as in spaces built on them, including HI spaces. Let us a recall that a mixed Tsirelson space  defined by a sequence of regular families  $\mathcal{F}_{n}$ of finite subsets of $\N$ and a sequence $\theta_{n}\searrow 0$, denoted by $T[(\mc{F}_{n},\theta_{n})_{n}]$, has norm satisfying  the implicit equation 
$$ 
\norm[x]=\max\left\{\norm[x]_{\infty}, \sup_{n}\theta_{n}\sup\left\{\sum_{i=1}^{d}\norm[E_{i}x]: (E_{i})_{i=1}^{d}-\mc{F}_{n}-\text{admissible}\right\}\right\}\,.
$$
The famous Schlumprecht space is the space $T[(\mc{A}_{n},1/\log_{2}(n+1))_{n}]$. Families $(\mc{S}_{n})$ define asymptotic $\ell_1$ mixed Tsirelson spaces introduced in \cite{ad2}. 

We construct a bounded strictly singular non-compact operator on any mixed Tsirelson space defined by the families $\mc{S}_{n}$, with the property  $\liminf_{n}\theta_{n+k}/\theta_{n}\geq c>0$ for any $k\in\N$, as well as in a class of Banach spaces built on its base, including the space $X_{AD}$ and their alike. Let us recall that I. Gasparis construction of an operator on the HI spaces  built on such mixed Tsirelson spaces  is based on the existence of $c_{0}^{\omega}$-spreading models in the dual space. In the spaces we consider such a property does not seem to hold in general. Gasparis method was adapted in \cite{ADT} for constructing HI spaces, built on spaces $T[(\mc{A}_{n_j},\frac{1}{m_j})_{j})] $, with diagonal strictly singular non-compact operators, as well as for constructing non-trivial strictly singular operators on asymptotic $\ell_p$ HI spaces \cite{B}.

Our method was inspired by the approach of \cite{as} and \cite{ADT}. As in case of \cite{as}, we base our construction on the unconditional components building spaces of the considered class (recall that in I. Gasparis construction strict singularity of the operator follows by HI property). In particular we consider spaces built on mixed Tsirelson spaces with the following property: the basis of the considered space is asymptotically equivalent to the basis of the underlying mixed Tsirelson space. 

In order to construct an operator on the space defined by Schreier families $(\mc{S}_n)$ we need a periodic version of rapidly increasing sequences. Using this notion we construct block sequences vectors $(x_n)$ and associated norming functionals $(f_n)$ with tree-analysis modeled on a fixed infinite tree. We show next that the  operator $Tx=\sum_{n=1}^{\infty}f_{q_n}(x)e_{t_n}$, for appropriate choice of  $(q_n)$, $(t_n)$, is bounded and strictly singular. This method applies also to a class of  spaces defined by families $(\mc{A}_n)$ with Schlumprecht space parameters property, i.e. satisfying  $\theta_{n}n^{a}\to +\infty$ for every $\alpha>0$. In such spaces our argument simplifies since we can use the classic notion of rapidly increasing sequences instead of its periodic versions, and our definition coincides with the construction of \cite{as}, however, we approach differently the constructed functionals in deriving the desired properties of the operator built on them.

The crucial feature of the tree construction of $(f_n)$ is that for appropriate choice of parameters it resembles property of being a $c_{0}^{\omega}$- or $c_{0}$-spreading model. More precisely we pick some $(r_j), (n_j)\subset\N$ with $(r_j/n_j)$ increasing arbitrary fast, such that for any $j< F\in\mc{S}_{r_j}$, the norm of $\sum_{n\in F}f_n$  is controlled, up to some "error" part, which should be dealt with separately, by $n_j$. This makes the behaviour of $(f_n)$ "close enough'' to $c_0$-type behaviour to obtain the desired properties of the operator. Dealing with the "error" part relies on asymptotic representation of the basis of underlying mixed Tsirelson space in the considered space, mentioned above.

Non-compactness of the operator is ensured by seminormalization of $(f_n)$, implied by seminormalization of corresponding vectors $(x_n)$. The last property, due to the periodic structure inscribed in the tree-analysis, requires delicate computation, made in original mixed Tsirelson spaces, and then transfered to a class of Banach spaces built on them. 

The paper is organized as follows. The first section contains the preliminaries. In the second section we consider a general method of construction of a bounded strictly singular non-compact operator on a given space, whose variant we shall use in the case of spaces built on mixed Tsirelson spaces. In the third section we recall basic properties of the mixed Tsirelson spaces defined both by the  families $(\mc{A}_n)$ and $(\mc{S}_n)$. The forth and fifth sections contains the main results. In the forth section we define the periodic averages and prove upper estimations for their norms, which provides tool for showing non-compactness of the constructed operator. The sixth section is devoted to the construction of a bounded strictly singular and non-compact operator on a space defined by the families  $\mc{S}_{n}$. At the end we sketch the construction in the case of the families  $\mc{A}_{n}$, simplifying the arguments used for the Schreier families.
\newline\textit{Acknowledgements.}
We are grateful to Professor Spiros Argyros for valuable discussions. 

\section{Preliminaries}

We recall  the basic definitions and standard notation.

By a {\em  tree}  we shall mean a non-empty partially ordered  set $(\mt, \leq)$ for which the set $\{ y \in \mt:y \preceq x \}$ is linearly ordered and finite for each $x \in \mt$. 
The tree $\mt$ is called {\em finite}  if the set $\mt$ is finite. The \textit{root} is the smallest element of the tree (if it exists). The {\em terminal}  nodes are the maximal elements. A {\em branch} of $\mt$ is any maximal linearly ordered set in $\mt$. The {\em immediate successors}  of $x \in \mt$, denoted by $\suc (x)$, are all  the nodes $y \in \mt$ such that $x \preceq y$ but there is no $z \in \mt$ with $x \preceq z \preceq y$. If $\mt$ has a root, then for any node $\al\in\mt$ we define a level of $\al$, denoted by $|\al|$, as the length of the branch linking $\al$ and the root. We refer to \cite{z2} for more information on trees. 

By $[\N]^{<\infty}$ we denote the family of all finite subsets of $\N$. 
Given any $\upsilon=(n_1,\dots, n_k)\in [\N]^{<\infty}$ and $n\in\N$ we let $\upsilon^\frown n=(n_1,\dots,n_k,n)$.  A family of  $\mc{F}$ of finite subsets of $\N$ is said to be regular if it is 
\begin{enumerate}
\item compact as a subset of  $\{0,1\}^{\N}$ with the product topology,
\item hereditary, i.e.  if $F\subset G$ and $G\in\mc{F}$ then $F\in \mc{F}$
\item spreading, i.e. if $m_{1}\leq n_{1},\dots, m_{d}\leq n_{d}$ then $(n_{i})_{i=1}^{d}\in\mc{F}$ whenever $(m_{i})_{i=1}^{d}\in\mc{F}$.
\end{enumerate}
Given a family $\mc{F}\subset [\N]^{<\infty}$ we say that the sequence of sets $E_1,\dots, E_d\subset \N$ is $\mc{F}$-admissible, if $\max E_1< \min E_2\leq \max E_2<\dots<\min E_d$ and $(\min E_i)_{i=1}^d\in \mc{F}$.  Given two families $\mc{F}, \mc{G}\subset [\N]^{<\infty}$ we define $\mc{F}[\mc{G}]$ as
$$
\mc{F}[\mc{G}]=\{F_1\cup\dots\cup F_d:\   F_1,\dots, F_d\in \mc{G}, \ (F_1, \dots, F_d)-\mc{F}\text{-admissible} \}\,.
$$
We work on two types of families of finite subsets of $\N$: $(\mc{A}_n)_{n\in\N}$ and $(\mc{S}_n)_{n\in\N}$. Let $\mc{A}_n=\{F\subset\N:\# F\leq n\}$, $n\in\N$, and define inductively \textit{Schreier families} $(\mc{S}_n)_{n\in\N}$, \cite{aa}, by
\begin{align*}
\mc{S}_0 &=\{\{ k\}:\ k\in\N\}\cup\{\emptyset\}, \\
\mc{S}_{n+1}&  =\mc{S}_1[\mc{S}_n], \ \ n\in\N\,.
\end{align*}
It it well known that the families  $(\mc{A}_{n})$, $(\mc{S}_{n})$ are regular.

Let $X$ be a  Banach space with basis $(e_i)$. The \textit{support} of a vector $x=\sum_{i} x_i e_i$ is the set $\supp x =\{ i\in \N : x_i\neq 0\}$. Given any $x=\sum_{i} a_ie_i$ and finite $E\subset\N$ put $Ex=\sum_{i\in E}a_ie_i$. We write $x<y$ for vectors $x,y\in X$, if $\max\supp x<\min \supp y$. A \textit{block sequence} is any sequence $(x_i)\subset X$ satisfying $x_{1}<x_{2}<\dots$, a \textit{block subspace} of $X$ - any closed subspace spanned by an infinite block sequence. 

Let $\mc{F}$ be a regular family of finite subsets  of $\N$. We say that a finite block sequence $(x_{i})_{i=1}^{d}$ is $\mc{F}$-admissible if  $(\minsupp(x_{i}))_{i=1}^{d}\in\mc{F}$. Given a family $\mc{F}$ and a scalar $\theta\in (0,1]$ by the $(\mc{F}, \theta)$-operation we mean an operation which associates with any $\mc{F}$-admissible sequence $(x_1,\dots,x_d)$ the average $\theta(x_1+\dots+x_d)$.

We shall use in the sequel the following
\br\label{ps} Let $X$ be a Banach space with basis and $G=(x_{n}^{*})_n$ be a seminormalized block sequence  in the dual space.  Let also $\mc{F}$ be a compact family of finite subsets of $\N$. Then the space $X_G$ defined as the completion of  $c_{00}$ under the norm
$$
\norm[x]_{G}=\sup\left\{\sum_{n\in F}\e_{n}x_{n}^{*}(x): \e_{n}\in\{-1,1\},\ F\in\mc{F}\right\}, \ \ \ x\in c_{00}
$$
is $c_{0}$-saturated, by the result of A. Pe\l{}czy\'{n}ski and Z. Semadeni \cite{PS}.
\er
\section{A non-compact strictly singular operator in a general situation}
We present now a general result is inspired by \cite{ADT}. It represents a general method, whose variant we shall use in case of spaces built on mixed Tsirelson spaces. 
\bpr\label{adt}
Let $X$ be a Banach space with a basis $(e_n)_n$ such that there exists a sequence of regular families $(\mc{F}_{n})_{n}\subset [\N]^{<\infty}$ with $\mc{F}_k\subset\mc{F}_n$ for any $k\leq n$, and a norming set  $D$ for $X$ such that
\bnum
\item for any $\e>0$ there is an $n_{\e}\in\N$ with the following property: for any $f\in D$ we have 
$$
\{n\in\N:\ |f(e_n)|>\e\}\in\mc{F}_{n_{\e}}
$$
\item there is a basic seminormalized sequence $(x_n^*)\subset X^*$, a strictly increasing $(N_j)\subset\N$ and $(\e_j)\subset (0,1)$ such that $\sum_j\e_jN_j<\infty$ and for any $j\in\N$, any $F\in\mc{F}_{n_{\e_{j+1}}}$  with  $F\geq n_{\e_{j+1}}$ we have $\norm[\sum_{n\in F}x_n^*]\leq N_j$. 
\enum
Then the operator $T:X\to X$ given by the formula $T(x)=\sum_{n=1}^\infty x_{q_n}^*(x)e_n$, where $q_{j}=n_{\e_{j+1}}$ for each $n\in\N$, is bounded. If additionally we assume that $X$ does not contain a copy of $c_0$, then $T$ is strictly singular.
\epr
The proof  of boundedness of $T$ is just repetition of the proof of Prop. 3.1 \cite{ADT} with necessary adjustments. We present this proof here as it will serve as a basis for showing also strict singularity of $T$. 
\bp 
Let $x\in X$ and $f\in D$. For any $j\in\N$ we let
\begin{align*}
B_j&=\{ n\in\N:\ \e_{j+1}<|f(e_n)|\leq \e_j\}\\
C_j^1&=\{n\in B_j:\ n\geq j,\ x_{q_n}^*(x)\geq 0,\ f(e_n)\geq 0\}\\
C_j^2&=\{n\in B_j:\ n\geq j,\ x_{q_n}^*(x)< 0,\ f(e_n)\geq 0\}\\
C_j^3&=\{n\in B_j:\ n\geq j,\ x_{q_n}^*(x)\geq 0,\ f(e_n)<0\}\\
C_j^4&=\{n\in B_j:\ n\geq j,\ x_{q_n}^*(x)< 0,\  f(e_n)<0\}\\
D_j&=B_j\cap\{1,\dots,j\}
\end{align*}
Then the set $A=\{q_n:\ n\in C_j^k\}$ satisfies $q_j\leq A\in\mc{F}_{q_j}$ by (1), and by (2) satisfies $\norm[\sum_{n\in C_j^k}x_{q_n}^*]\leq N_j$ for any $j\in\N$, $k=1,2,3,4$.

In order to show boundedness we compute
\begin{align*}
|f(\sum_{n\in\N}x_{q_n}^*(x)e_n)|&\leq \sum_{j=1}^\infty|\sum_{n\in B_j}x_{q_n}^*(x)f(e_n)|\\
&\leq \sum_{j=1}^\infty|\sum_{n\in D_j}x_{q_n}^*(x)f(e_n)|+\sum_{j=1}^\infty\sum_{k=1}^4|\sum_{n\in C_j^k}x_{q_n}^*(x)f(e_n)|\\
& \leq \sum_{j=1}^\infty \e_j j\norm[x]+\sum_{j=1}^\infty\sum_{k=1}^4|\sum_{n\in C_j^k}x_{q_n}^*(x)|\max_{n\in C_j^k}|f(e_n)|\\
& \leq \sum_{j=1}^\infty (\e_j j+4\e_j N_j)\norm[x]
\end{align*}
Now we assume additionally that $X$ does not contain a copy of $c_0$. Consider the norm $\norm_G$, with $G=\{x_{q_n}^*: \ n\in\N\}$ and $\mc{F}=\mc{F}_{q_{j_0}}$ defined as in Remark \ref{ps}. 
As $\norm_{G}\leq (N_{j_0}+q_{j_0})\norm$ it follows that for any $\e>0$ in any block subspace of $X$ there is a vector $x$ with $\norm[x]=1$ and $\norm[x]_{G}<\e$. Fix $j_0\in\N$ and take  a vector $x$  with $\norm[x]_G<(j_0)^{-2}$ and $\norm[x]=1$. Take $f\in D$, define the sets $(B_j)$ as above and estimate
\begin{align*}
|f(\sum_{n\in\N}x_{q_n}^*(x)e_n)|&\leq \sum_{j=1}^\infty|\sum_{n\in B_j}x_{q_n}^*(x)f(e_n)|\,.
\end{align*}
For the part $\sum_{j=j_0}^\infty(|\sum_{n\in B_j}x_{q_n}^*(x)f(e_n)|)$ we repeat the previous estimations, obtaining the upper bound $5\sum_{j=j_0}^\infty\e_j n_j$. By the choice of $x$ we have also
\begin{align}\label{g-norm}
\sum_{j=1}^{j_0-1}|\sum_{n\in B_j}x_{q_n}^*(x)f(e_{t_n})| &\leq \sum_{j=1}^{j_0-1} \sum_{n\in B_j}|x_{q_n}^*(x)|\,|f(e_{t_n})|
\\
&=\sum_{j=1}^{j_0-1}\rho_{r_{j}}\sum_{n\in B_j}\e_{n}x_{q_n}^*(x)\,\,\text{for suitable $\e_{n}\in\{-1,1\}$} \notag
 \\
 &
 \leq j_{0} \norm[x]_G\leq \frac{1}{j_0} . \notag
\end{align}
Putting the estimations altogether we obtain $\norm[Tx]\leq 5\sum_{j=j_0}^{\infty}\e_jn_j+1/j_0$ while $\norm[x]=1$. Since we can pick such vector $x$ for any $j_0$ and in any subspace of $X$, we finish the proof of strict singularity of $T$. 
\ep
In case of spaces built on mixed Tsirelson spaces the large part of the work will be devoted to showing that the chosen sequence of functionals is seminormalized (Section 4), and estimations on the norms of the sum of functionals will be more delicate than the assumption (2) (Section 5). 
\section{Spaces  $T[(\mc{F}_{n},\theta_{n})_{n}]$ - basic facts}
We recall now the definition of mixed Tsirelson spaces and their basic properties.

\bd[Mixed Tsirelson space] Fix a sequence of families $(\mc{F}_n)=(\mc{A}_{k_n})$ or $(\mc{S}_{k_n})$ and sequence $(\theta_n)\subset (0,1)$ with $\lim_{n\to\infty}\theta_n=0$. Let $K\subset c_{00}$ be the smallest set satisfying the following:
\bnum 
 \item $(\pm e_n^*)_n\subset K$,
 \item $K$ is closed under the $(\mc{F}_n,\theta_n)$-operation on block sequences for any $n$. 
\enum 
We define a norm on $c_{00}$ by $\nrm{x}=\sup\{f(x):f\in K\}$, $x\in c_{00}$. The \textit{mixed Tsirelson space} $T[(\mc{F}_n,\theta_n)_n]$ is the completion of $(c_{00}, \nrm{\cdot})$.
\ed
It is standard to verify that the norm $\nrm{\cdot}$  is the unique norm on $c_{00}$ satisfying the equation
$$
\nrm{x}=\max\left\{\nrm{x}_\infty,\sup\left\{\theta_n\sum_{i=1}^k\nrm{E_ix}: \ (E_i)_{i=1}^k - \mc{F}_n- \text{admissible}, \ n\in\N\right\}\right\}\,.
$$
It follows immediately that the u.v.b. $(e_n)$ is 1-unconditional in the space $T[(\mc{F}_n,\theta_n)_n]$. It was proved in \cite{ad2} that any $T[(\mc{S}_{k_n},\theta_n)_n]$ is reflexive, also any $T[(\mc{A}_{k_n},\theta_n)_n]$ is reflexive, provided $\theta_n>\frac{1}{k_n}$ for at least one $n\in\N$, \cite{ato}. 

Recall that any space $T[(\mc{A}_{k_n},\theta_n)_n]$ is isometric to some space $T[(\mc{A}_n,\hat{\theta}_n)_n]$ satisfying the following regularity condition: $\hat{\theta}_{n}\hat{\theta}_{m}\leq\hat{\theta}_{nm}$, $n,m\in\N$, therefore we assume in the sequel that the considered space is of this regular form. In this case we shall assume additionally that  $\theta_{n}n^{a}\to +\infty$ for every $a>0$ (i.e. the considered space is a 1-space in the terminology of \cite{m}).    

Analogously any space $T[(\mc{S}_{k_n},\theta_n)_n]$ is isometric to some space $T[(\mc{S}_n,\hat{\theta}_n)_n]$ satisfying the following regularity condition: $\hat{\theta}_{n}\hat{\theta}_{m}\leq\hat{\theta}_{n+m}$, $n,m\in\N$, and thus we assume again that any considered space defined by Schreier families is of this regular form.  

In the sequel we shall  state the results and present proofs valid in both cases of spaces defined by $(\mc{A}_n)$ and  $(\mc{S}_n)$, formulating them in terms of tree-analysis of norming functionals and suitable averages. In case of $(\mc{A}_n)$ the reasoning gets much more simplified, as we shall point out. 

From now on $X$ denotes the space $T[(\mc{F}_n,\theta_n)_n]$, where $(\mc{F}_n)_n=(\mc{A}_n)_n$ or $(\mc{S}_n)_n$  with $(\theta_n)$ satisfying the above regularity conditions, and $K$ denotes its norming set.

The following notion provides a useful tool for estimating norms in mixed Tsirelson spaces:
\bd[The tree-analysis of a norming functional]\label{def-tree} Let $f\in K$. By a \textit{tree-analysis} of $f$ we mean a finite family $(f_\al)_{\al\in \mt}$ indexed by a tree $\mt$ with a root $\emptyset\in \mt$ (the smallest element) such that the following hold
\bnum
 \item $f_\emptyset=f$ and $f_\al\in K$ for all $\al\in \mt$,
 \item $\al\in T$ is maximal if and only if $f_\al\in (\pm e_n^*)$,
 \item for every not maximal $\al\in T$ there is some $n\in\N$ such that $(f_\beta)_{\beta\in\suc (\al)}$ is an $\mc{F}_n$-admissible  sequence and $f_\al=\theta_n(\sum_{\beta\in\suc (\al)}f_\beta)$. We call $\theta_n$ the \textit{weight} of $f_\al$ and denote by $w(f_\al)$. 
 \enum
 \ed
Notice that every functional $f\in K$ admits a tree-analysis, not necessarily unique.
\begin{notation}
Given any norming functional $f\in K$ with a tree-analysis $(f_\al)_{\al\in\mt}$ and a block sequence $(x_i)$ we say that $f_\al$ covers $x_{i}$, if $\al$ is maximal in $\mt$ with $\supp f_\al\supset\supp x_{i}\cap \supp f$.
\end{notation}
We shall use also the following standard facts.
\begin{fact}\label{fact1} For any block sequence of norming functionals $(f_i)$ on $X$ and any $x\in c_{00}$ we have $\norm[\sum_i f_i(x)e_i]\leq \norm[x]$.
\end{fact}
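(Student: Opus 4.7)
The plan is to show that for every norming functional $g\in K$ one has $|g(\sum_i f_i(x)e_i)| \leq \norm[x]$, which immediately yields the desired inequality. Since $g(\sum_i f_i(x)e_i) = \sum_i g(e_i)f_i(x)$, the natural strategy is to produce, out of $g$ and the block sequence $(f_i)$, a single functional
$$
h := \sum_i g(e_i)f_i
$$
and to argue that $h \in K \cup \{0\}$. Once this is established, $|h(x)| \leq \norm[x]$ by definition of the norm and we are done, because $h(x) = g(\sum_i f_i(x)e_i)$.

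I will build $h$ by induction on the tree-analysis of $g$. For the base case $g = \pm e_n^*$, the sum collapses to $h = \pm f_n$ (or $0$, if $n$ is not an index of the block sequence), and this is in $K \cup \{0\}$ by assumption on $(f_i)$. For the inductive step, write $g = \theta_m \sum_{\beta}g_\beta$ with $(g_\beta)$ an $\mc{F}_m$-admissible block sequence; the inductive hypothesis supplies $h_\beta \in K \cup \{0\}$ associated to each $g_\beta$, and the candidate is $h := \theta_m \sum_\beta h_\beta$, which one checks satisfies $h(x) = g(\sum_i f_i(x)e_i)$ by linearity.

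The main (and essentially only) technical point is verifying that $(h_\beta)$ is an $\mc{F}_m$-admissible block sequence, which is what certifies $h \in K$. Block-disjointness of $(h_\beta)$ follows from that of $(g_\beta)$ together with the block structure of $(f_i)$. For admissibility, I will use that a block sequence of functionals satisfies $\min\supp f_i \geq i$, so if $i_\beta$ denotes the smallest index $i$ with $g_\beta(e_i)\neq 0$, then
$$
\min\supp h_\beta \;\geq\; \min\supp f_{i_\beta} \;\geq\; i_\beta \;=\; \min\supp g_\beta.
$$
Since the sequence $(\min\supp h_\beta)_\beta$ is increasing with $\beta$ and dominates $(\min\supp g_\beta)_\beta \in \mc{F}_m$ term by term, the spreading property of $\mc{F}_m$ gives $(\min\supp h_\beta)_\beta \in \mc{F}_m$, as required. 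I expect this admissibility check to be the only non-cosmetic step; everything else is bookkeeping of the inductive construction.
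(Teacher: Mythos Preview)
Your argument is correct. The paper takes a different and much terser route: it simply invokes the well-known fact that in a mixed Tsirelson space the unit vector basis is $1$-dominated by every normalized block sequence, i.e.\ $\norm[\sum_i a_i e_i]\le\norm[\sum_i a_i u_i]$ for any scalars $(a_i)$ and any normalized block sequence $(u_i)$. From this the Fact follows at once: choosing successive intervals $E_i\supset\supp f_i$ one has $|f_i(x)|\le\norm[E_ix]$ since $f_i\in K$, so by $1$-unconditionality and then the domination property applied with $u_i=E_ix/\norm[E_ix]$,
\[
\norm[\textstyle\sum_i f_i(x)e_i]\;\le\;\norm[\textstyle\sum_i\norm[E_ix]\,e_i]\;\le\;\norm[\textstyle\sum_i E_ix]\;\le\;\norm[x].
\]
Your tree-analysis construction (replacing each leaf $\pm e_n^*$ of $g$ by $\pm f_n$ and checking $\mc{F}_m$-admissibility via spreading) is essentially the standard proof \emph{of} that block-domination property, carried out directly in the present situation rather than quoted as a black box. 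What you gain is a self-contained argument that does not ask the reader to recall an external lemma; what the paper gains is a one-line proof by outsourcing the tree induction to a known fact.
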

This Fact  is an immediate consequence of the fact that $\norm[\sum_{i}a_{i}e_{i}]\leq \norm[\sum_{i}a_{i}u_{i}]$ for every normalized block sequence $(u_i)$ and any $(a_{i})\subset\R$. 

We shall use  the notion of a skipped set. Recall that a set $L\subset \N$ is $M$-skipped, for $M\in\N$, if for any $n,m\in L$ we have $|n-m|\geq M$. The following lemma is an easy consequence of the definition of an $M$-skipped set.
\begin{lemma} \label{l1}
Let $(a_{j})_{j\in \N}$ be a decreasing sequence   of positive numbers with $\sum_{j\in\N}a_j=1$  and $L\in [\N]$  be an  $M$-skipped subset of $\N$, $M\in\N$. Then
\begin{equation}\label{eq0}
\sum_{j\in L}a_{j}\leq  \frac{1}{M}+\sup_{j\in L}a_{j}.
\end{equation}
\end{lemma}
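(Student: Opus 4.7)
My plan is to enumerate $L$ in increasing order, say $L=\{j_1<j_2<\cdots\}$, and exploit the two pieces of information we have: that the sequence $(a_j)$ is decreasing (so local averages dominate later terms) and that the $M$-skipped property spaces out the indices of $L$ enough to fit disjoint windows of length $M$ between consecutive elements.

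First I would observe that since $(a_j)$ is decreasing, $\sup_{j\in L}a_j=a_{j_1}$; this term I will simply keep. The task then reduces to bounding $\sum_{k\geq 2}a_{j_k}$ by $1/M$. For $k\geq 2$, the $M$-skipped hypothesis gives $j_k\geq j_{k-1}+M$, so every index in the block
$$I_k=\{j_{k-1}+1,\,j_{k-1}+2,\,\dots,\,j_{k-1}+M\}$$
is at most $j_k$, and monotonicity yields $a_i\geq a_{j_k}$ for each $i\in I_k$. Averaging across $I_k$ gives $a_{j_k}\leq\frac{1}{M}\sum_{i\in I_k}a_i$.

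The critical observation is that the windows $(I_k)_{k\geq 2}$ are pairwise disjoint: $\max I_k=j_{k-1}+M\leq j_k<j_k+1=\min I_{k+1}$. Summing over $k\geq 2$ therefore yields
$$\sum_{k\geq 2}a_{j_k}\leq\frac{1}{M}\sum_{k\geq 2}\sum_{i\in I_k}a_i\leq\frac{1}{M}\sum_{j\in\N}a_j=\frac{1}{M},$$
and adding back $a_{j_1}=\sup_{j\in L}a_j$ gives the desired inequality \eqref{eq0}. The same argument works verbatim if $L$ is finite.

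There is no real obstacle here; the only point requiring a moment of care is the disjointness of the windows $I_k$, which hinges on the strict inequality $j_{k-1}+M\leq j_k$ coming from the $M$-skipped definition. If instead one had $|n-m|>M$ or $\geq M+1$, the windows could be extended, but the bound $1/M$ is already what is claimed, so no sharpening is needed.
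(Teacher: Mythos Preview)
Your argument is correct: separating off the first term $a_{j_1}=\sup_{j\in L}a_j$ and bounding each subsequent $a_{j_k}$ by the average over the disjoint length-$M$ window $I_k$ gives exactly the claimed estimate. The paper does not give a proof of this lemma, stating only that it is ``an easy consequence of the definition of an $M$-skipped set,'' so there is nothing to compare against beyond noting that your approach is the natural one.
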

In our proofs we shall use the notion of the basic average and the basic special convex combination and some of its basic properties which we recall below.
\begin{definition}[\cite{S}, \cite{ADKM}, Special averages]$ $ Fix $n\in\N$ and $\e>0$.

 A) We say that a vector  $x=\frac{1}{n}\sum_{i=1}^{n}e_{k_{i}}$ is an $\e$-$\ell_1^n$-vector if $k_{1}<\dots<k_{n}$ and $1/n<\e$.

B)  We say that a vector $x=\sum_{i\in F}a_ie_i$ is an $(n,\e)$-basic s.c.c. (special convex combination), 
if $F\in\mc{S}_n$ and scalars $(a_i)\subset [0,1]$ satisfy $\sum_{i\in F}a_i=1$ and $\sum_{i\in G}a_i<\e$ for any $G\in\mc{S}_{n-1}$. 
\end{definition}
In the sequel in order to unify the notation we shall call a vector $x\in T[(\mc{F}_n,\theta_n)_n]$ an $(n,\e)$-\textit{special basic average} either if it is an $\e-\ell_1^n$-vector in case $\mc{F}_{n}=\mc{A}_{n}$, $n\in\N$,   or it is an $(n,\e)$-basic s.c.c. in case $\mc{F}_{n}=\mc{S}_{n}$, $n\in\N$. We say that a vector $x=\sum_{i\in F}a_ix_i$ is an $(n,\e)$-special average  of a normalized block sequence $(x_i)_{i\in F}$, if the vector $\sum_{i\in F}a_ie_{s_{i}}$ is an $(n,\e)$-basic special average, 
$s_{i}=\maxsupp x_{i}$. Recall that for any $n,m\in\N$ and $\e>0$ there is an $(n,\e)$-basic special average $x$ with $x>m$.

In this unified terminology we state the next two lemmas, which contain all the properties of special averages we shall need in the sequel.
\begin{lemma}\cite{S}, \cite{ADKM}
  \label{normbasicscc}
Let  $x=\sum_{i\in F}a_{i}e_{i}\in X$ be an  $(n,\e)$-basic special average. Then
$$
\theta_{n}\leq \norm[x]\leq \theta_{n}+\e\,.
$$
\end{lemma}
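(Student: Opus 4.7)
\emph{Plan of proof.} The argument splits into a clean lower bound and a more delicate upper bound, both classical.

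For the lower bound $\theta_n\le\norm[x]$, I would exhibit a single norming functional that realises $\theta_n$. In both cases the support $F$ is $\mc F_n$-admissible: when $\mc F_n=\mc A_n$ we have $|F|=n$, and when $\mc F_n=\mc S_n$ the defining property of a basic s.c.c. gives $F\in\mc S_n$. Hence one application of the $(\mc F_n,\theta_n)$-operation to the singletons $(e_i^*)_{i\in F}$ produces
$$\phi:=\theta_n\sum_{i\in F}e_i^{*}\in K,$$
and $\phi(x)=\theta_n\sum_{i\in F}a_i=\theta_n$, since the coefficients $(a_i)$ sum to one in both definitions.

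For the upper bound, the plan is to take an arbitrary $f\in K$, fix a tree-analysis $(f_\al)_{\al\in\mt}$ of it, and expand $f$ along the leaves of $\mt$. For each leaf $\al$ whose singleton support $\{i_\al\}$ meets $F$, let
$$P_\al:=\prod_{\beta\prec\al}w(f_\beta),$$
so that the coefficient of $e_{i_\al}$ in $f$ has absolute value at most $P_\al$, and therefore
$$|f(x)|\le\sum_{\al}a_{i_\al}P_\al.$$
I then split the contributing leaves according to the threshold $\theta_n$: let $\mathcal L^{\le}=\{\al:P_\al\le\theta_n\}$ and $\mathcal L^{>}=\{\al:P_\al>\theta_n\}$. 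The contribution of $\mathcal L^{\le}$ is trivially bounded:
$$\sum_{\al\in\mathcal L^{\le}}a_{i_\al}P_\al\le\theta_n\sum_{\al}a_{i_\al}\le\theta_n\sum_{i\in F}a_i=\theta_n.$$

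The hard part is to show that the contribution of $\mathcal L^{>}$ is at most $\e$. Using $P_\al\le1$, it suffices to show $\sum_{\al\in\mathcal L^{>}}a_{i_\al}<\e$, and for this I would prove the combinatorial claim that the set $I^{>}:=\{i_\al:\al\in\mathcal L^{>}\}$ lies in $\mc F_{n-1}$ (i.e.\ is of size $\le n-1$ in the $\mc A_n$ case, or belongs to $\mc S_{n-1}$ in the Schreier case). The claim then forces $\sum_{i\in I^{>}}a_i<\e$ by the very definition of a basic special average (either directly from $|I^{>}|/n<\e$, refined by induction against $\norm[x]_\infty=1/n<\e$ in the $\ell_1^n$-case, or from the s.c.c.\ property $\sum_{i\in G}a_i<\e$ for $G\in\mc S_{n-1}$).

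The combinatorial claim itself is the main obstacle and the heart of the proof. It rests on the regularity conditions on $(\theta_n)$: in the Schreier case $\theta_a\theta_b\le\theta_{a+b}$ combined with $\mc S_a[\mc S_b]\subseteq\mc S_{a+b}$, and in the $\mc A_n$ case $\theta_a\theta_b\le\theta_{ab}$ combined with $\mc A_a[\mc A_b]=\mc A_{ab}$. A branch from the root to a leaf $\al\in\mathcal L^{>}$ has weights $\theta_{m_1},\dots,\theta_{m_k}$ with $\prod\theta_{m_j}>\theta_n$, so by regularity and strict monotonicity of $(\theta_n)$ one gets $m_1+\cdots+m_k\le n-1$ (resp.\ $m_1\cdots m_k\le n-1$). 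Combining this with the iterated admissibility along the tree and the spreading property of $\mc F_{n-1}$, an induction on the height of $\mt$ yields $I^{>}\in\mc F_{n-1}$. This is the standard tree-analysis computation of Schlumprecht and Argyros--Deliyanni--Kutzarova--Manoussakis cited in the statement, and summing the two contributions gives $\norm[x]\le\theta_n+\e$.
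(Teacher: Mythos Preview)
The paper itself does not give a proof of this lemma; it is quoted from \cite{S} (for the $(\mc{A}_n)$ case) and \cite{ADKM} (for the $(\mc{S}_n)$ case), so there is no in-paper argument to compare yours against. Your lower bound and your treatment of the Schreier case are exactly the standard argument from \cite{ADKM}: the combinatorial claim $I^{>}\in\mc S_{n-1}$ follows from the additive regularity $\theta_a\theta_b\le\theta_{a+b}$ together with $\mc S_a[\mc S_b]\subset\mc S_{a+b}$, and then the defining s.c.c.\ property gives $\sum_{i\in I^{>}}a_i<\e$ directly. That part is fine.

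The gap is in the $(\mc{A}_n)$ case. Your leaf-splitting does yield $I^{>}\in\mc A_{n-1}$, but since $a_i=1/n$ this only gives
\[
\sum_{i\in I^{>}}a_i=\frac{|I^{>}|}{n}\le\frac{n-1}{n},
\]
which is nowhere near $\e$ (the only constraint on $\e$ is $\e>1/n$). Your phrase ``refined by induction against $\|x\|_\infty=1/n<\e$'' does not repair this: cutting the tree at the minimal nodes with $P_\alpha\le\theta_n$ and bounding the remaining terminal branches by $\|x\|_\infty$ still leaves up to $n-1$ such terminal nodes, so the error term is of order $(n-1)/n$, not $1/n$. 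In other words, the uniform ``small coefficients on a thin set'' mechanism that drives the Schreier proof has no analogue for an $\ell_1^n$-vector, where every coefficient is the same size $1/n$. The actual argument from \cite{S} does not split at the leaves; it is a genuine induction on the complexity of $f$, writing $f=\theta_m\sum_{i=1}^d f_i$ and, in the delicate case $m<n$, applying the inductive hypothesis to each $f_i$ against the induced $\ell_1^{n_i}$-sub-average and then using submultiplicativity of $(\theta_n)$ together with the concavity/growth hypothesis on $n\mapsto n\theta_n$ (this is where the standing assumption $\theta_n n^a\to\infty$ for all $a>0$ enters). You should replace the leaf-splitting in the $(\mc{A}_n)$ case by that recursive estimate.
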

\begin{lemma}\cite{S}, \cite{ADKM}
  \label{lem1}
For every $\e>0$ and $k\in\N$ there exist  $w(\e, k)\in\N$ such that  for every $n\geq w(\e,k)$, for every $(n, \e)$-basic special average $x=\sum_{i\in F}a_{i}e_{i}\in X$ and every $f\in K$ with $w(f)\geq\theta_{k}$ the following holds
\begin{equation}
  \label{eq01}
 \vert f(\theta_{n}^{-1}x)\vert\leq (1+\e)w(f).
\end{equation}
\end{lemma}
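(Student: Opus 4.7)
The plan is to analyze $|f(x)|$ via the tree-analysis of $f$, splitting the contribution according to whether the accumulated product of weights along each branch exceeds or drops below the threshold $w(f)\theta_n$.

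First, I would fix a tree-analysis $(f_\al)_{\al\in\mt}$ of $f$, and for each node $\al$ set $c(\al)=\prod_{\beta\prec\al,\,\beta\ne\al}w(f_\beta)$, the product of weights on the strict path from the root to $\al$. Unfolding the recursion gives the expansion $f=\sum_{\al\in\mc{L}}\eta_\al c(\al)\,e_{k_\al}^{*}$, where $\mc{L}$ is the set of leaves of $\mt$, $f_\al=\eta_\al e_{k_\al}^{*}$ with $\eta_\al\in\{-1,+1\}$, and $c(\al)\le w(f)$ for every non-root $\al$. I would then introduce the antichain $\mc{M}\subset\mt$ of maximal nodes $\al$ with $c(\al)\ge w(f)\theta_n$; this partitions $\mc{L}$ into $\mc{L}_1=\mc{M}\cap\mc{L}$ and $\mc{L}_2=\mc{L}\setminus\mc{M}$, and correspondingly $f(x)=S_1+S_2$.

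For $S_1$, the indices $\{k_\al:\al\in\mc{L}_1\}$ arise as iterated $\mc{F}_{r_\gamma}$-admissible unions determined by the ancestors $\gamma$ above $\mc{L}_1$. Along every branch reaching $\mc{L}_1$ one has $\prod_\gamma\theta_{r_\gamma}\ge w(f)\theta_n\ge\theta_k\theta_n$, so the regularity $\theta_p\theta_q\le\theta_{p+q}$ (respectively $\theta_{pq}$ in the $(\mc{A}_n)$ case) forces $\sum_\gamma r_\gamma$ (resp.\ $\prod_\gamma r_\gamma$) to be bounded by a constant $p=p(k)$ depending only on $k$ and not on $n$. Taking $n\ge w(\e,k)$ so large that $p\le n-1$, the $(n,\e)$-special-average property of $x$ yields $\sum_{\al\in\mc{L}_1}a_{k_\al}\le\e$, and hence $|S_1|\le w(f)\,\e$.

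For $S_2$, every leaf in $\mc{L}_2$ lies strictly below a unique internal $\beta\in\mc{M}$, so the total contribution below $\beta$ equals $c(\beta)f_\beta(x)$, bounded in absolute value by $c(\beta)\|x\|\le c(\beta)(\theta_n+\e)$ via Lemma~\ref{normbasicscc}. Summing over the internal nodes of $\mc{M}$, whose associated supports $\supp f_\beta$ inherit an admissible structure from the tree analysis, the norm-defining inequality for $x$ (combined with 1-unconditionality) allows telescoping the accumulated weights $c(\beta)$ level by level back to a single factor of $w(f)$, giving $|S_2|\le(1+\e)w(f)\theta_n$. Combining the two bounds and choosing $w(\e,k)$ large enough to absorb the $w(f)\e$ contribution of $S_1$ into $(1+\e)w(f)\theta_n$ yields the claim. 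The main obstacle is the $n$-independent bound on $p$ in the estimate for $S_1$: it is precisely the regularity assumption on $(\theta_n)$ together with $w(f)\ge\theta_k$ that guarantees $\mc{M}$ sits at bounded depth, so that $\{k_\al:\al\in\mc{L}_1\}$ falls inside $\mc{S}_{n-1}$ (resp.\ $\mc{A}_{n-1}$) as soon as $n$ exceeds a suitable $w(\e,k)$.
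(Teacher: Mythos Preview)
The paper does not give its own proof of this lemma; it simply cites \cite{S} and \cite{ADKM}. So there is no proof in the paper to compare against, and the question is whether your sketch stands on its own. It does not: there is a genuine gap in the $S_1$ estimate.

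Your central claim is that along every branch reaching $\mc{L}_1$ the inequality $\prod_\gamma\theta_{r_\gamma}\ge w(f)\theta_n\ge\theta_k\theta_n$, together with regularity, forces $\sum_\gamma r_\gamma$ (resp.\ $\prod_\gamma r_\gamma$) to be bounded by a constant $p=p(k)$ \emph{independent of $n$}. This is false. Write $c(\al)=w(f)\prod_{\gamma\ne\emptyset}\theta_{r_\gamma}$; the threshold $c(\al)\ge w(f)\theta_n$ only says $\prod_{\gamma\ne\emptyset}\theta_{r_\gamma}\ge\theta_n$, hence by regularity $\sum_{\gamma\ne\emptyset}r_\gamma\le n$, and including the root $\sum_\gamma r_\gamma\le r_0+n\le k+n$. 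This grows with $n$. A concrete counterexample in the Schreier case: take $f=\theta_1\cdot f_1$ with $f_1=\theta_{n-1}\sum_{j\in J}e_j^*$, $J\in\mc{S}_{n-1}$. Every leaf $\al$ has $c(\al)=\theta_1\theta_{n-1}\ge\theta_1\theta_n=w(f)\theta_n$, so $\mc{L}_1$ contains all leaves, yet $\sum_\gamma r_\gamma=1+(n-1)=n$. Thus $\{k_\al:\al\in\mc{L}_1\}$ need not lie in any $\mc{S}_{p(k)}$ with $p(k)$ independent of $n$.

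There is a second problem in the combination step. Even granting $\{k_\al:\al\in\mc{L}_1\}\in\mc{S}_{n-1}$ and hence $|S_1|\le w(f)\,\e$, you would obtain
\[
|f(\theta_n^{-1}x)|\le \theta_n^{-1}w(f)\e+(1+\e)w(f)=w(f)\bigl(\e\theta_n^{-1}+1+\e\bigr),
\]
and $\e\theta_n^{-1}\to\infty$ as $n\to\infty$ with $\e$ fixed. No choice of $w(\e,k)$ can ``absorb'' this term into $(1+\e)w(f)$. The bound $|S_2|\le w(f)\theta_n$ is indeed immediate (each leaf in $\mc{L}_2$ has $c(\al)<w(f)\theta_n$, and $\sum a_{k_\al}\le 1$), but the $S_1$ piece is where the real work lies, and the argument you give for it does not go through. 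The proofs in \cite{S} and \cite{ADKM} handle this differently: Schlumprecht's argument for the $(\mc{A}_n)$ case works at the first level of the tree and exploits the concavity of $t\mapsto t\theta_t$ together with $\theta_{n/k}/\theta_n\to 1$, while the $(\mc{S}_n)$ argument in \cite{ADKM} uses the admissibility structure of the s.c.c.\ against the tree more carefully than a single threshold cut allows.
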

\section{Periodic averages and their norm}
This section is devoted to the vectors of a special type in mixed Tsirelson spaces. Their role in our construction is to ensure the seminormalization of the associated norming functionals building the operator, which in turn provide non-compactness of the operator. The final result of this section is stated in Lemma \ref{??}. 
\begin{definition}\label{repdef}
Fix $n_{0}\in\N, \e,\tilde{\e}>0$, a block sequence $x_{1},\dots,x_{NM}$ with $c=\max_{p=1,\dots,NM}\norm[x_p]<\infty$, and parameters $(n_i)_{i=0}^{M+1}, (q_i)_{i=0}^{M+1}$,   satisfying the following
\begin{enumerate}
 \item[(P1)] 
$q_{0}<n_{0}<q_{1}<n_{1}<\dots< q_{M+1}$, 
$\theta_{q_{1}}\leq\tilde{\e}\theta_{n_{0}}^{2}/2,$ 
$n_{0}\geq w(\e,q_{0})$ 
and $\theta_{q_{i+1}}\leq\theta_{n_{i}}^2$ for all $i$,
\item[(P2)] for every $f\in K$ with $w(f)\geq \theta_{q_j}$ we have
\begin{equation}
\qquad \label{es1}
\vvert[f(x_{(k-1)M+i} ) ]\leq c(1+\tilde{\e})w(f)\,\,\,\text{for every $k$ and every}\,\,\ i=j,\dots,M.\notag
\end{equation}
\item[(P3)] for every $f\in K$ with $w(f)\leq \theta_{q_{j+1}}$ we have
\begin{equation}
\label{es2}
\qquad \vvert[f(x_{(k-1)M+i})]\leq c(1+\tilde{\e})\theta_{n_{i}}\,\,\text{ for every $k$ and every}\,\,\ i=1,\dots,j.
\notag
\end{equation}
\end{enumerate}
Let  $x=\sum_{k=1}^{N}\sum_{j=1}^{M}a_{(k-1)M+j}x_{(k-1)M+j}$  be an $(n_{0},\e)$-special average.  We shall call the sequence $\{x_{(k-1)M+j}: k\leq N, j\leq  M\}$ an $(n_{0},\e,\tilde{\e},M)$-periodic RIS (rapidly increasing sequence) and we shall call $x$ also an $(n_{0},\e,\tilde{\e},M)$-periodic average. 
\end{definition}
\begin{remarks}
1) Taking  $N=1$ we obtain the standard notion of RIS of special types of vectors, originated in \cite{GM} and  \cite{ad2}.  

2)  In the case of families $(\mc{A}_n)$ we have $NM=n_{0}$. 

3) In order to construct the operator with desired properties on spaces $T[(\mc{A}_n,\theta_n)_n]$ and spaces based on them it is enough to consider only the simplest version of the above notion, i.e. with $N=1$ and $M=n_0$, as it was used in \cite{as}. The full strength of the above notion, making use of the periodic repetition of weights in the sequence $(x_i)$ will be necessary in the spaces defined by Schreier families.  
\end{remarks}
M-skipped sets as well as  a more complicated form of periodic averages have been used by D. Leung and W.K. Tang, \cite{LTang}, to provide new examples of mixed Tsirelson spaces  $T[(\mc{S}_{n},\theta_n)_n]$ not isomorphic to their modified version.

The next proposition gives us upper bounds of the norm of a periodic average. 
We use in the sequel the following notation: for a fixed block sequence $(x_{n})_{n}$ we let 
$s_n=\maxsupp(x_n)$ for any $n$.
\begin{proposition}\label{main}
Let  $x=\sum_{k=1}^{N}\sum_{j=1}^{M}a_{(k-1)M+j}x_{(k-1)M+j}$  be an $(n_{0},\e,\tilde{\e}, M)$-periodic average of $x_1,\dots,x_{NM}$, with $N,M\in\N$, $\theta_{n_0}\leq 1/10$, $0<\e,\tilde{\e}\leq \theta_{n_{0}}^{2}$,  and $\norm[x_p]\leq 1$ for each $p=1,\dots,NM$. Then for every $f\in K$ there exists $g\in conv(K)$ such that
\begin{equation}
  \label{eq:9}
 \vvert[f(x)]
\leq
(1+\tilde{\e})g(\sum_{k=1}^{N}\sum_{j=1}^{M}a_{(k-1)M+j}e_{s_{(k-1)M+j}})+\e_{1}+\e_{2}
\end{equation}
where $w(f)=w(g)$ if   $w(f)\not\in [\theta_{q_{M}+1},\theta_{q_{1}}]$  otherwise either $w(f)=w(g)$ or $g=\lambda e^{*}_{p}+(1-\lambda)\theta_{n_{0}}h$, $h\in K$, for some $\lambda\in[0,1]$, $\e_{1}=(1+\tilde{\e})\theta_{n_{1}}$ and $\e_{2}=2(\frac{1}{M}+\sup_{p}a_{p})(1-\theta_{1})^{-1}$.
In particular 
$$
\norm[\theta_{n_{0}}^{-1}x]\leq (1+\e)(1+\tilde{\e})+(\e_{1}+\e_{2})\theta_{n_{0}}^{-1}.
$$
Moreover if  
\begin{equation}\label{condM}
4(\frac{1}{M}+\sup_{p}a_{p})\leq (1-\theta_{1})\theta_{n_{0}}^{3}
\end{equation}
then  
$\norm[\theta_{n_{0}}^{-1}x]\leq (1+2\theta_{n_{0}})$ and 
 \begin{equation}  \label{eq:10}
 \vvert[f(\theta_{n_{0}}^{-1}x)]
\leq 
\begin{cases}
 (1+3\theta_{n_{0}})w(f)\,\,\,&\text{if}\,\,w(f)\geq\theta_{q_{0}}
\\
 (1+3\theta_{n_{0}})\theta_{n_{0}}\,\,\,&\text{if}\,\,w(f)\leq\theta_{q_{1}}
\end{cases}
\end{equation}
\end{proposition}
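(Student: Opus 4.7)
\noindent\textit{Plan of proof.} Fix $f\in K$ with a tree-analysis $(f_\al)_{\al\in\mt}$. The strategy is to build $g\in\conv(K)$ by descending through $\mt$ and, at each covering node of some $x_p$, replacing the local action of $f$ on $\supp x_p$ by a signed multiple of $e^*_{s_p}$; the periodic structure of $(x_p)$ then bounds each $\vvert[f(x_p)]$ via (P2) or (P3). For each $p=(k-1)M+j$ with $\supp x_p\cap\supp f\neq\emptyset$, let $\al_p$ be the maximal $\al\in\mt$ with $\supp f_\al\supseteq\supp x_p\cap\supp f$ and set $w_p=w(f_{\al_p})$. The relevant indices partition according to the alignment of $w_p$ with the periodic position $j$: the \emph{aligned high} class ($w_p\geq\theta_{q_j}$) where (P2) gives $\vvert[f_{\al_p}(x_p)]\leq(1+\tilde\e)w_p$; the \emph{aligned low} class ($w_p\leq\theta_{q_{j+1}}$) where (P3) gives $\vvert[f_{\al_p}(x_p)]\leq(1+\tilde\e)\theta_{n_j}$; and the \emph{misaligned} class ($w_p\in(\theta_{q_{j+1}},\theta_{q_j})$).

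The aligned-high indices are encoded into the skeleton of $g$: at the surviving internal nodes $\al$ of $\mt$ we set $g_\al=w(f_\al)\sum_\beta g_\beta$, preserving the admissibility of $f$, and at each aligned-high covering node we place a signed copy of $e^*_{s_p}$; when several such leaves coincide on one coordinate we average, producing $g\in\conv(K)$. The aligned-low class contributes at most $\e_1$: since $j\geq 1$ and $(\theta_{n_i})$ is decreasing, $\sum_{p\text{ low}}a_p\vvert[f(x_p)]\leq(1+\tilde\e)\theta_{n_1}\sum_p a_p=\e_1$. The sparsity observation handles the misaligned class: for any fixed weight $\theta_n$, the condition $\theta_n\in(\theta_{q_{j+1}},\theta_{q_j})$ pins down $j$ uniquely (since the $q_i$ are strictly increasing), so the misaligned indices associated with a node of weight $\theta_n$ lie in the $M$-skipped subset $\{j,j+M,j+2M,\dots\}$ of $[NM]$. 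Lemma \ref{l1} then gives $\sum a_p\leq\frac{1}{M}+\sup_p a_p$ on this subset; summing across the weights appearing in the tree-analysis and using that products of weights along any path decay by a factor $\leq\theta_1$ per level produces a geometric series of ratio $\theta_1$ whose total is $\leq 2(\frac{1}{M}+\sup_p a_p)(1-\theta_1)^{-1}=\e_2$.

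Combining the three contributions yields (\ref{eq:9}). The alternative form $g=\lambda e^*_p+(1-\lambda)\theta_{n_0}h$ for $w(f)\in[\theta_{q_{M+1}},\theta_{q_1}]$ arises when a single $x_p$ carries a coefficient large enough to dominate the reconstruction and must be separated from the recursive tree. The ``in particular'' estimate is immediate: $y=\sum_p a_p e_{s_p}$ is an $(n_0,\e)$-basic special average with $\norm[y]\leq\theta_{n_0}+\e$ by Lemma \ref{normbasicscc}, and any $g\in\conv(K)$ has dual norm at most $1$. For (\ref{eq:10}), condition (\ref{condM}) gives $\e_2\leq\theta_{n_0}^3/2$ and (P1) forces $\theta_{n_1}\leq\theta_{q_1}\leq\tilde\e\theta_{n_0}^2/2$, so $\e_1$ is even smaller; hence $(\e_1+\e_2)\theta_{n_0}^{-1}$ is absorbed into the factor $(1+3\theta_{n_0})$. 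Lemma \ref{lem1} applied to $g$ handles the case $w(f)\geq\theta_{q_0}$ (so $w(g)=w(f)\geq\theta_{q_0}$ as $q_0<q_1$), and the trivial bound $g(y)\leq\norm[y]$ handles the case $w(f)\leq\theta_{q_1}$. The main obstacle is the bookkeeping in the construction of $g$: one must preserve $\mc{F}_n$-admissibility at every recursive replacement, correctly attribute each $x_p$ to a single category, and ensure that the $M$-skipped sparsity argument applies uniformly across all possible tree-analyses of $f$ and across all weight scales simultaneously.
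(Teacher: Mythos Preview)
Your plan follows the same architecture as the paper's proof: tree-analysis of $f$, classification of each covered $x_p$ into aligned-high, aligned-low, and misaligned categories, recursive construction of $g_\al$ replacing $x_p$ by $e^*_{s_p}$, and an $M$-skipped sparsity argument for the misaligned error $\e_2$. Two points, however, are stated imprecisely enough that they would not go through as written.

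First, the $\e_2$ bookkeeping is not ``per weight'' but \emph{per level}. At a single node $\al$ the misaligned set $U_\al$ is indeed contained in one arithmetic progression $\{j,j+M,\dots\}$, but different nodes at the same level may have different weights (hence different $j$'s), and there may be arbitrarily many nodes, so summing over weights or over nodes gives no control. The paper's key observation is that at a fixed level $n$, the union $K_n=\bigcup_{|\al|=n,\ \#U_\al\geq 2}U_\al$ splits into at most \emph{two} $M$-skipped sets (because the $U_\al$'s are consecutive blocks, each $M$-skipped with at least two elements), whence $\sum_{p\in K_n}a_p\leq 2(\frac{1}{M}+\sup_p a_p)$; the geometric series over levels (not weights) then yields $\e_2$, and this is where the factor $2$ comes from.

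Second, the alternative form $g=\lambda e^*_{s_p}+(1-\lambda)\theta_{n_0}h$ does not arise ``when a coefficient is large enough'' but exactly when $\#U_\al=1$. In that case the lone misaligned $p_0$ is \emph{not} put into $\e_2$ (it is not part of any $M$-skipped set with $\geq 2$ elements); instead one uses $\vvert[f_\al(x_{p_0})]\leq 1$ directly, and the remaining $x_p$'s covered by $\al$ are absorbed via the condition $w(f_\al)\leq\theta_{q_1}\leq\tfrac{\tilde\e}{2}\theta_{n_0}^2$ from (P1), which is what permits replacing $w(f_\al)$ by $\theta_{n_0}$ in $\tilde g_\al$ while staying in $\conv(K)$. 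Without separating this case you cannot simultaneously keep $g\in\conv(K)$ and control the error.
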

\begin{proof}
Take the parameters $(n_{i}), (q_{i})$ defining $x$ as in (P1) and let $f$ be a norming functional with the tree-analysis $(\fa)_{\alpha\in\mc{T}}$. For every $\alpha\in\mc{T}$ we set
$$
C_{\alpha}=\{p: \fa\,\text{covers}\,\,\ x_{p}\}\ \text{ and }\ \widetilde{C}_{\alpha}=\{p: \supp x_{p}\subset\supp \fa\}
$$
Then $\widetilde{C}_{\alpha}=C_{\alpha}\cup\bigcup_{\beta\in\suc\alpha}\widetilde{C}_{\beta}$.
Let
$$
I_{k}=[(k-1)M+1,\dots, kM]\,\,\,\text{for}\,\, k=1,\dots,N.
$$
For every $\alpha\in\mc{T}$ we set
$$
U_{\al}=\{p\in C_{\al}: p=(k-1)M+j\,\text{for some  $k,j$ and}\,\,\, w(\fa)\in (\theta_{q_{j+1}},\theta_{q_{j}} ] \}.
$$
Note that there are no  $j\ne j^{\prime}$ such that  $(k-1)M+j,(k^{\prime}-1)M+j^{\prime}\in U_{\al}$.

We set  $R_{\alpha}=U_{\alpha}$ if $\# U_{\alpha}\geq 2$, otherwise  $R_{\alpha}=\emptyset$.

Let   $n<h(\mc{T})$. Set $K_{n}=\cup_{\vert\alpha\vert=n} R_{\alpha}$. It follows easily   that  the set $K_{n}$ is the union of at most two  $M$-skipped sets. It follows from \eqref{eq0} that for every $n$,
\begin{equation}
  \label{eq:3}
  \sum_{p\in K_{n}}a_{p}\leq  2(\frac{1}{M}+\sup_{p}a_{p})=\e_2(1-\theta_1).
\end{equation}
Inductively for every $\alpha\in\mt$ we  define  a   functional $g_{\alpha}\in \conv(K) $ such that
\begin{equation}
  \label{eq:4}
  \vvert[\fa (\sum_{p\in \widetilde{C}_{\alpha}}a_{p}x_{p})]\leq 
(1+\tilde{\e})g_{\alpha}(\sum_{p\in   \widetilde{C}_{\alpha}}a_{p}e_{s_{p}})
+\e_{\alpha}^{1}+\e_{\alpha}^{2}
\end{equation}
where $w(f_\al)=w(g_\al)$ if   $w(f_\al)\not\in [\theta_{q_{M}+1},\theta_{q_{0}}]$  otherwise
either $w(f_\al)=w(g_\al)$ or $g_\al=\lambda e^{*}_{s_{p}}+(1-\lambda)\theta_{n_{0}}h$, $h\in K$ for some
$\lambda\in[0,1]$,
and
\begin{align}\label{errors1}\e_{\alpha}^{1}=(1+\tilde{\e})\theta_{n_{1}}
\sum_{p\in\widetilde{C}_{\alpha}}a_{p}\,\,\text{ and}\,\,\,\,
\e_{\alpha}^{2}=\sum_{p\in  R_{\al}}a_{p}+w(\fa)\sum_{\beta\in\suc(\al)}\e_{\beta}^{2}.
\end{align}
From \eqref{eq:3}, \eqref{errors1} it follows that
\begin{equation}
  \label{eq:6}
 \e^{2}_\emptyset\leq \e_2(1-\theta_1)\sum_{n\geq 0}\theta_{1}^{n}\leq 
\e_2.
\end{equation}
If $\widetilde{C}_{\alpha}=\emptyset$ we set $g_{\alpha}=0$.

Assume that $\widetilde{C}_{\alpha}\ne\emptyset$.    If $C_{\alpha}=\emptyset$ we set $g_{\alpha}=w(\fa)\sum_{\beta\in\suc(\al)}g_{\beta}$, $\e_{\al}^{1}, \e_{\al}^{2}$ as in \eqref{errors1}.

Assume that  $C_{\alpha}\ne\emptyset$. We distinguish the following cases, examining the cardinality of    $U_{\al}$.

\textit{CASE 1}.   $U_{\alpha}=\emptyset$.

In this case  we set  
$$
g_{\alpha}=w(\fa)(\sum_{p\in C_{\alpha}}e^{*}_{s_{p}}+\sum_{\beta\in succ(\alpha)}g_{\beta})
$$
If $(k-1)M+j\in C_{\alpha}$ and  $w(f)\geq \theta_{q_{j}}$, (P2) yields
 \begin{align}\label{eqkm-j}
 f_{\alpha} ( a_{(k-1)M+j}x_{ (k-1)M+j}) &\leq (1+\tilde{\e})a_{(k-1)M+j}w(\fa) 
\\
&\leq (1+\tilde{\e})g_{\alpha}(a_{(k-1)M+j}e_{s_{(k-1)M+j}})
\notag
 \end{align}
If $(k-1)M+j\in C_{\alpha}$ and  $w(\fa)\leq \theta_{q_{j+1}}$, (P3) yields
 \begin{align}\label{eqkm+j}
 f_{\alpha} ( a_{(k-1)M+j}x_{ (k-1)M+j}) &\leq
 (1+\tilde{\e})\theta_{n_j}a_{(k-1)M+j}\leq (1+\tilde{\e})\theta_{n_1}a_{(k-1)M+j}
\\
&\leq g_{\alpha}(a_{(k-1)M+j}e_{s_{(k-1)M+j}})+(1+\tilde{\e})\theta_{n_1}a_{(k-1)M+j}.
\notag
 \end{align}
For any $\beta\in\suc(\alpha)$ we have
 \begin{align}\label{ecov}
 f_{\alpha}(&\sum_{p\in \widetilde{C}_{\beta}}a_{p}x_{p})=w(f_\al)f_{\beta}( \sum_{p\in \widetilde{C}_{\beta}}a_{p}x_{p})
 \\
 & \leq 
 (1+\tilde{\e})w(f_\al)g_{\beta}( \sum_{p\in \widetilde{C}_{\beta}}a_{p}e_{s_{p}})+w(f_\al)\e_{\beta}^{1}+w(f_\al)\e_{\beta}^{2}\quad 
&\text{by inductive hypothesis}\notag
 \\
&=(1+\tilde{\e})g_{\al}( \sum_{p\in \widetilde{C}_{\beta}}a_{p}e_{s_{p}})+w(f_\al)(\e_{\beta}^{1}+\e_{\beta}^{2}) &
\notag
 \end{align} 
From \eqref{eqkm-j}, \eqref{eqkm+j} and \eqref{ecov} we get that \eqref{eq:4} holds, as
\begin{align*}
 \fa(\sum_{p\in \widetilde{C}_\al}a_px_p)\leq (1+\tilde{\e})g_\al(\sum_{p\in \widetilde{C}_\al}a_pe_{s_p})+w(\fa)\sum_{\beta\in\suc(\al)}\e^2_\beta+(1+\tilde{\e})\theta_{n_1}\sum_{p\in\widetilde{C}_\al}a_p
\end{align*}

\textit{CASE 2}. $\# U_{\alpha}=1$.

Let $k_{0},j_{0}$ be such that $p_{0}=(k_{0}-1)M+j_{0}\in U_{\alpha}$ and  $\theta_{q_{j_0}+1}< w(f_\al)\leq \theta_{q_{j_{0}}}$. We set
\begin{align}\label{1aa}
g_{\alpha}=\frac{1}{1+\tilde{\e}}e^{*}_{s_{(k_{0}-1)M+j_{0}}}+
\frac{\tilde{\e}}{1+\tilde{\e}}\theta_{n_{0}}\tilde{g}_{\alpha}
\end{align}
where 
$\tilde{g}_{\alpha}=\theta_{n_0}(\sum_{p_{0}\ne p \in C_{\alpha}}e^{*}_{s_{p}}+\sum_{\beta\in\suc(\al)}g_{\beta})$.

Since $x$ is an $\mc{F}_{n_{0}}$-special average, the set  $\{\fb: \widetilde{C}_{\beta}\ne\emptyset\}\cup \{s_{p}: p\in C_{\alpha}\}$ is at most $\mc{F}_{n_0}$-admissible and hence 
$$
\tilde{g}_{\alpha}=\theta_{n_{0}}(\sum_{p_{0}\ne p\in C_{\alpha}}e_{s_{p}}^{*}+\sum_{\beta\in\suc(\alpha)}g_{\beta})\in \conv(K).
$$
We have
\begin{equation}\label{eqj0}
\vvert[\fa(x_{(k_{0}-1)M+j_{0}})]\leq 1.
\end{equation}
Note that if $(k-1)M+j\in  C_{\alpha}$ for $ j>j_{0}$,  (P2) yields
\begin{align}
\label{eqj0+j}\vert \fa(x_{(k-1)M+j})\vert &\leq
(1+\tilde{\e})w(f_\al)\leq
(1+\tilde{\e})\frac{\tilde{\e}}{1+\tilde{\e}}\theta^{2}_{n_{0}},\,\,\text{by (P1) } 
\\
&=(1+\tilde{\e})\frac{\tilde{\e}}{1+\tilde{\e}}\theta_{n_{0}}\tilde{g}_{\al}(e_{s_{p}})=(1+\tilde{\e})g_{\al}(e_{s_{p}})
\notag
\end{align}
Also  (P3) and (P1) yield that for every $(k-1)M+j\in C_{\al}$, $j<j_{0}$
\begin{equation}\label{eqj0-j}
\vvert[\fa(x_{(k-1)M+j})]\leq (1+\tilde{\e})\theta_{n_{j}}\leq
(1+\tilde{\e})\frac{\tilde{\e}}{1+\tilde{\e}}\theta^{2}_{n_{0}}.
\end{equation}
For any $\beta\in\suc(\alpha)$, as in \eqref{ecov}, we have, using $w(f_\al)\leq\theta_{q_1}<\tilde{\e}\theta^{2}_{n_{0}}/2$, 
 \begin{align}\label{ecov2}
 f_{\alpha}(&\sum_{p\in \widetilde{C}_{\beta}}a_{p}x_{p})\leq (1+\tilde{\e})g_{\al}( \sum_{p\in \widetilde{C}_{\beta}}a_{p}e_{s_{p}})+w(f_\al)(\e_{\beta}^{1}+\e_{\beta}^{2}) &
 \end{align}
Therefore  by  \eqref{eqj0}, \eqref{eqj0+j}, \eqref{eqj0-j} and \eqref{ecov2} we get
  \begin{align*}
  f_{\alpha}(\sum_{p\in\widetilde{C}_{\alpha}}a_px_p) &\leq 
  a_{p_{0}}+(1+\tilde{\e})g_{\al}(\sum_{p_{0}\ne p\in \widetilde{C}_{\al}}a_{p}e_{s_{p}})+
\\
&\qquad\quad +(1+\tilde{\e})\theta_{n_{1}}\sum_{p\in C_{\al}}a_{p}+w(\fa)\sum_{\beta\in\suc(\al)} \e_{\beta}^{1}+w(f_\al)\sum_{\beta\in\suc(\al)}\e_{\beta}^{2}
  \\
  &=\frac{1}{1+\tilde{\e}} (1+\tilde{\e})a_{p_{0}}e^{*}_{p_{0}}(e_{p_{0}})+ (1+\tilde{\e})g_{\al}(\sum_{p_{0}\ne p\in \widetilde{C}_{\al}}a_{p}e_{s_{p}}) +\e_{\al}^{1}+\e_{\al}^{2}
  \\
  & = (1+\tilde{\e}) g_{\alpha} ( \sum_{p\in\widetilde{C}_\al}a_{p}e_{s_p})+\e_{\al}^{1}+\e_{\al}^{2}.
  \end{align*}
 It is clear  that  \eqref{eq:4} holds.
\\

\textit{CASE 3}.  $\# U_{\alpha}\geq 2$

From the definition of $U_{\alpha}$ we get that there exists $j_{0}\in \{1,\dots,M\}$ such that $(k-1)M+j_{0}\in U_{\alpha}$ for at least two $k$ and $w(f_\al)\in (\theta_{q_{j_0}+1},\theta_{q_{j_0}}]$.

Note that $\bigcup_{k}\{(k-1)M+j_{0}\in U_{\alpha}\}$ is an $M$-skipped sequence. Let 
$$
g_{\alpha}=w(f_\al)(\sum_{p\in C_{\alpha}}e^{*}_{s_p}+\sum_{\beta\in\suc(\alpha)}g_{\beta})
$$
Repeating the reasoning from Case 1 we obtain
\begin{align*}
 \fa(\sum_{p\in \widetilde{C}_\al\setminus R_\al}a_px_p)\leq
 (1+\tilde{\e})g_\al(\sum_{p\in \widetilde{C}_\al\setminus R_\al}a_pe_{s_p})+w(\fa)\sum_{\beta\in\suc(\al)}(\e^{1}_{\beta}+\e^2_\beta) +(1+\tilde{\e})\theta_{n_1}\sum_{p\in\widetilde{C}_\al}a_p
\end{align*}
We have also 
\begin{align*}
 \fa(\sum_{p\in R_\al}a_px_p)\leq \sum_{p\in R_\al}a_p
\end{align*}
which proves  \eqref{eq:4}.

For the moreover part, for $f\in K$ with $w(f)\geq \theta_{q_{0}}$  \eqref{eq:9} yields
\begin{align}\label{fes2}
  \vvert[f(\theta_{n_{0}}^{-1}x)]&\leq
 (1+\tilde{\e})g_{f}(\theta_{n_{0}}^{-1}\sum_{p}a_{p}e_{s_{p}})+(\e_{1}+\e_{2})\theta_{n_{0}}^{-1}
  \\
&\leq (1+\tilde{\e})(1+\e)w(f)+\theta_{n_{0}}^{2}/2+\theta^{2}_{n_{0}}/2\notag
  \\
  &\leq  (1+3\theta_{n_{0}})w(f)\,\,\,\textrm{by the choice $\tilde{\e},\e<\theta_{n_{0}}^{2}$}.
  \notag
\end{align}
Also for every $f\in K$ with $w(f)\leq\theta_{q_1}$, \eqref{eq:9} yields
\begin{align}\label{fes3}
  \vvert[f(\theta^{-1}_{n_{0}}x)]
  &\leq (1+\tilde{\e})g_{f}(\theta_{n_{0}}^{-1}\sum_{p}a_{p}e_{s_{p}})+(\e_{1}+\e_{2})\theta_{n_{0}}^{-1}
    \\
 &\leq (1+\tilde{\e})\max\{\theta_{n_{0}}^{-1}w(f), \theta_{n_{0}}^{-1}\sup_{p}a_{p}+\tilde{\e}\theta_{n_{0}}\norm[\theta_{n_{0}}^{-1}x]\} +\theta^{2}_{n_{0}} 
\notag
   \\
 &\leq (1+3\theta_{n_{0}})\theta_{n_{0}}  
\notag
\end{align}
\end{proof}
\begin{remark}\label{remmain}
1) When considering the simplest version of periodic averages, i.e. with $N=1$ (the case we shall use in the sequel for mixed Tsirelson spaces defined by families $(\mc{A}_n)$) the proof above reduces to the Cases 1 and 2, as $\# U_\al\leq 1$ always, with $\e_2=0$. In case of spaces $T[(\mc{A}_n, \theta_n)_n]$ with $\theta_nn^a\to\infty$ for any $a>0$ the result follows by \cite{S}.

2) If in case of $T[(\mc{A}_n,\theta_n)_n]$ we take $(n_{0},\e,\tilde{\e},M)$-periodic RIS with $N\geq 1$, then using that $NM=n_{0}$  and  $\theta_{n}n^{a}\to +\infty$ for every $a>0$ we can always choose arbitrary large $M$ and $n_0$ to satisfy inequality \eqref{condM} and the "moreover" part to hold.
\end{remark}

\br\label{main-col}  The above result enables us to construct repeated periodic averages. 

The first inductive step of the construction is provided by the following observation. Fix $n_0,M\in\N$ with $\theta_{n_0}\leq 1/10$ and $0<\e,\tilde{\e}\leq \theta_{n_{0}}^{2}$. Let $(x_{(k-1)M+j})_{k=1,\dots,N,j=1,\dots,M}\subset X$ be a block sequence such that each $x_{(k-1)M+j}$ is an $(n_j,\e_j)$-basic special average, with parameters satisfying (P1) and $n_j\geq w(\e,q_j)$ for each $j$. Then by Lemma \ref{lem1} the sequence $(\theta_{n_j}^{-1}x_{(k-1)M+j})_{ k=1,\dots,N,j=1,\dots,M}$ satisfies also (P2) and (P3) and thus we can define an $(n_0,\e,\tilde{\e},M)$-periodic average of this sequence. 

The general inductive step of the construction is given by the following remark. Fix $n_0,M\in\N$ with $\theta_{n_0}\leq 1/10$ and $0<\e,\tilde{\e}\leq \theta_{n_{0}}^{2}$. Consider a block sequence $(x_{(k-1)M+s})_{k=1,\dots,N, s=1,\dots,M}\subset X$ such that each $x_{(k-1)M+s}$ is an $(n_{s},\e_s, \tilde{\e}_s,M_s)$-periodic average, with parameters $M_s$, $n_s$, $(a_{p,s})_p$ satisfying \eqref{condM} and $\e_s,\tilde{\e}_s<\theta_{n_s}^2/2$. Assume also that for some sequence $(q_s)$ we have  
$$
n_s\geq w(\e_s,q_s), \ \ \theta_{q_{s+1}}\leq \tilde{\e}_s\theta_{n_s}^2/2 \text{ for each }s
$$
and that first two parameters of the sequence $(q_{i,s})_{i=0}^{M_s+1}$ defining each $x_{(k-1)M+s}$ is given by $q_{0,s}=q_s$ and $q_{1,s}=q_{s+1}$. Then by Prop. \ref{main} applied to the sequence $((1+3\theta_{n_1})^{-1}\theta_{n_s}^{-1}x_p)_{p=1}^{NM}$ the sequence $(\theta_{n_{s}}^{-1}x_{(k-1)M+s})_{k=1,\dots,N,s=1,\dots,M}$ satisfies also (P2) and (P3) and thus we can define an $(n_0,\e,\tilde{\e},M)$-periodic average of this sequence.
\er
With the above we are ready to define repeated averages based on the periodic structure described above. 
\bd A core tree is any tree $\mc{R}\subset\cup_n\N^n$ on $\N$ with tree order $\preceq$ and lexicographic order $\leq_{lex}$ with no terminal nodes and a root $\emptyset$ with associated parameters $(m_\mu)_{\mu\in\mc{R}}\subset\N$. For any $\mu\in\mc{R}$ we let also $M_\mu=\#\suc (\mu)$. 
\ed
Such a tree served as a model for tree-analysis in \cite{as} in $(\mc{A}_n)$ setting. 
\begin{notation}
We enumerate nodes of $\mc{R}$ according to the lexicographic order as $(\mu_j)_{j\in\N}$, starting with $\mu_0=\emptyset$. For simplicity we shall write also $m_j=m_{\mu_j}$ and $M_j=M_{\mu_j}$ for any $j\in\N$.

Define also $\ord(\mu)$ for $\mu\in\mc{R}$ as $\ord(\emptyset)=0$ and $\ord(\mu)=\sum_{\gamma\in\mc{R}:\ \gamma\prec\mu}m_\gamma$ for any $\mu\neq\emptyset$.
\end{notation}
\begin{definition}[Periodic RIS tree-analysis]\label{RIS-tree-an}
We say that a vector $x\in X$ has a periodic RIS tree-analysis of height $n\in\N$ with core $\mc{R}$, if there is a family $(x_\al)_{\al\in\mt}$, indexed by a tree $\mt$ on $\N$ of height $n$ with a root and a mapping 
$\upsilon:\mt\to\mc{R}$ with the following properties:
\begin{enumerate}
\item for any terminal node $\al\in\mt$ we have $|\al|=n$ and $x_\al=e_{t_\al}$ for some $t_\al\in \N$,
\item for any node $\al\in\mt$ with $|\al|=n-1$ the vector $x_\al$ is a "seminormalized" $(m_{\upsilon(\al)},\e_\al)$-basic special average of $(x_\beta)_{\beta\in\suc(\al)}$ i.e. $x_{\al}=\theta_{m_{\upsilon(\al)}}^{-1}\sum_{i\in F_{\alpha}}c_{i}e_{i}$ for some $F_\al\in\ms_{m_{\upsilon(\al)}}$ and $\e_{\al}=\theta_{m_{\upsilon(\al)}}^2$.
\item for any node $\al\in\mt$ with $|\al|<n-1$ the vector $x_\al$ is a "seminormalized" $(m_{\upsilon(\al)},\e_\al, \tilde{\e}_\al, M_{\upsilon(\al)})$-periodic average of $(x_\beta)_{\beta\in\suc(\al)}$, where 
$$\suc(\al)=\{\al^\frown((k-1)M_{\upsilon(\al)}+j):\ k=1,\dots, N_\al, \ j=1,\dots, M_{\upsilon(\al)}\}\,,$$ 
that is 
$$
x_\al=\theta^{-1}_{m_{\upsilon(\al)}}\sum_{k=1}^{N_\al}\sum_{j=1}^{M_{\upsilon(\al)}}a_{\al^\frown((k-1)M_{\upsilon(\al)}+j)}x_{\al^\frown((k-1)M_{\upsilon(\al)}+j)}
$$
with $\e_\al=\theta_{m_{\upsilon(\al)}}^3/4$, $\tilde{\e}_\al= \theta_{m_{\upsilon(\al)}}^2$.
\item $\upsilon(\emptyset)=\emptyset$ and for any $\al\in\mt$ with $|\al|<n-1$ 
we have 
$$
\upsilon (\al^\frown((k-1)M_{\upsilon(\al)}+j))=\upsilon(\al)^\frown j, \ \ \ k=1,\dots, N_\al, \ j=1,\dots, M_{\upsilon(\al)},
$$
\end{enumerate}
\end{definition}
\br\label{rem}

1) If we consider the simplest version of the above notion, setting  $N_\al=1$ for every node $\al\in\mt$ with $|\al|<n-1$, we obtain the notion of "repeated averages" of \cite{amt}.

2) Notice that in the case of families $(\mc{A}_n)$, if we consider the simplest version of periodic averages, i.e. $N_\al=1$ for each $\al$ with $|\al|<n-1$, then $M_{\upsilon(\al)}=m_{\upsilon(\al)}$ and thus we can identify, via the mapping $\upsilon$, the tree $\mt$ with restriction $\mc{R}\cap\cup_{i=0}^n\N^i$ of the tree $\mc{R}$. Here our construction coincides with the approach of \cite{as}.

3) For any $\mu\in\mc{R}$ the family $(x_\al)_{\upsilon(\al)=\mu}$ is $\mc{F}_{\ord(\mu)}$-admissible. 

4) Notice that $\supp x\in\mc{S}_{p_n}$, for some $p_n$ depending only on the core tree $\mc{R}$ and the height $n$.

\er
Given a vector  $x$ with a periodic RIS tree-analysis $(x_{\al})_{\al\in \mt}$ as in Def. \ref{RIS-tree-an} we define in a natural way \textit{an associated norming functional} 
 $f\in K$ with a tree-analysis $(\fa)_{\al\in\mt}$ as follows (with the notation of Def. \ref{RIS-tree-an}):
\begin{enumerate}
\item for any terminal node $\al\in\mt$ we set $f_\al=e_{t_\al}^*$,
\item for any node $\al\in\mt$ with $|\al|=n-1$ we set  
$f_{\al}=\theta_{m_{\upsilon(\al)}}\sum_{i\in F_{\alpha}}e_{i}^*$,
\item for any node $\al\in\mt$ with $|\al|<n-1$ we set 
$$
f_\al=\theta_{m_{\upsilon(\al)}}\sum_{k=1}^{N_\al}
\sum_{j=1}^{M_{\upsilon(\al)}}
f_{\al^\frown((k-1)M_{\upsilon(\al)}+j)}
$$
\end{enumerate}

For any $\mu\in\mc{R}$  let $c_\mu=\prod_{\gamma\prec\mu, \gamma\neq\mu}\theta_{m_\gamma}$. For simplicity, with abuse of notation we shall write $c_{\mu_j}=c_j$ for any $j\in\N$.

Notice that for any $\al\in\mt$ with $\upsilon(\al)=\mu$ we have $f|_{\supp f_\al}=c_\mu f_\al$.
\br\label{rem-z0} The key-property we achieve building repeated averages on the periodic RIS sequences in $(\mc{S}_n)$ setting is the following: whereas the number of descendants of each node in $\al\in\mt$ can be arbitrary large, depending on $\minsupp x$, we keep the number of weights of descendants of each node $\al\in\mt$ depending only on the parameter $M_\mu$ of the node $\mu\in\mc{R}$  with $\upsilon(\al)=\mu$. Therefore having fixed at the beginning a core tree $\mc{R}$ we control the number of weights of any level of the  tree-analysis of any norming functional with a core $\mc{R}$. 
\er
\begin{lemma}\label{??} With the above notation assume the following conditions:
\begin{enumerate}
\item[(R0)] $m_{j+1}\geq w(\theta_{m_j}^3/4,q_j)$ with some $q_j\in\N$ satisfying $\theta_{q_j}\leq \theta_{m_j}^4$ for any $j\in\N$,
 \item[(R1)] 
$\theta_{m_j}<1/2^{j+1}$ for any $j\in\N$,
\item[(R2)] $M_j>4(1-\theta_{1})^{-1}\theta_{m_j}^{-4}$ for any $j\in\N$.
\end{enumerate}
Then for any $n$ there exists a vector $x\in X$ with periodic RIS tree-analysis modeled on a core tree $\mc{R}$ with parameters $(m_j)$ and $(M_j)$ of height $n$. Moreover,  for every $\alpha\in\mt$ with $\vvert[\al]<n$ it holds
\begin{equation}
  \label{normxan}\norm[x_{\al}]\leq \prod_{j:\ |\mu_j|\geq\vvert[\al]}(1+3\theta_{m_{j}})
\end{equation}
In particular $\norm[x]\leq 3$ and $1/3\leq\norm[f]\leq 1$, where $f$ is the norming functional associated to $x$. 
\end{lemma}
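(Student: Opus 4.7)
My plan is to build $x$ by backward induction on the level $|\al|\in\mt$, starting with the leaves ($|\al|=n$) and working down to the root ($|\al|=0$), invoking Proposition \ref{main} through the iterative scheme of Remark \ref{main-col} at each interior level. At every stage I will verify the hypotheses (P1)--(P3) of Definition \ref{repdef} and the normalization inequality \eqref{condM}, then read the norm bound off the ``moreover'' part of Proposition \ref{main}. The translation of these local requirements into the global hypotheses (R0)--(R2) is where the main technical work lies.

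For the base step I set $x_\al=e_{t_\al}$ at each terminal node, choosing the integers $t_\al$ in the lexicographic order of $\mt$ large enough that the admissibility constraints of every ancestor can be met. At each $\al$ at level $n-1$ I then pick $x_\al=\theta_{m_{\upsilon(\al)}}^{-1}\sum_{i\in F_\al}c_ie_i$, a seminormalized $(m_{\upsilon(\al)},\theta_{m_{\upsilon(\al)}}^2)$-basic special average with $F_\al$ placed sufficiently far to the right. Its existence is standard, Lemma \ref{normbasicscc} yields $\norm[x_\al]\leq 1+\theta_{m_{\upsilon(\al)}}$, and Lemma \ref{lem1} provides the estimates (P2) and (P3) that trigger the first step of Remark \ref{main-col}.

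For the inductive step at a node $\al$ of level $k<n-1$, having built $(x_\beta)_{\beta\in\suc(\al)}$ with the inductive norm bound, I form $x_\al=\theta_{m_{\upsilon(\al)}}^{-1}\sum_{\beta\in\suc(\al)}a_\beta x_\beta$ as prescribed by Definition \ref{RIS-tree-an}. The general inductive step of Remark \ref{main-col} spells out which parameter relations to check: the nesting $n_s\geq w(\e_s,q_s)$ and the weight-gap $\theta_{q_{s+1}}\leq\tilde{\e}_s\theta_{n_s}^2/2$, both packaged by (R0); and the size inequality \eqref{condM}, which is ensured by (R2) since $\sup_p a_p\leq\e_\al=\theta_{m_{\upsilon(\al)}}^3/4$ for a basic special average. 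Applying Proposition \ref{main} to the sequence rescaled by the common bound $\max_\beta\norm[x_\beta]$ then yields
\[
\norm[x_\al]\leq(1+3\theta_{m_{\upsilon(\al)}})\max_{\beta\in\suc(\al)}\norm[x_\beta],
\]
which combined with the inductive hypothesis gives \eqref{normxan}.

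Finally, at $\al=\emptyset$ the bound $\sum_j 3\theta_{m_j}<3$ from (R1) controls the infinite product and yields $\norm[x]\leq 3$ by an elementary product estimate. For the associated functional $f$ I verify by induction on $|\al|$ that $f_\al(x_\al)=1$: at leaves $e_{t_\al}^*(e_{t_\al})=1$; at level $n-1$, $f_\al(x_\al)=\sum_{i\in F_\al}c_i=1$; and at interior nodes $f_\al(x_\al)=\sum_{\beta\in\suc(\al)}a_\beta f_\beta(x_\beta)=\sum_\beta a_\beta=1$, using that the coefficients of a basic special average sum to $1$. Hence $f(x)=1$, whence $\norm[f]\geq 1/3$, and $\norm[f]\leq 1$ is immediate from $f\in K$. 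The most delicate aspect will be the parameter bookkeeping: one has to thread the local sequences $(n_{s,\al})$, $(q_{s,\al})$ attached by Definition \ref{repdef} to each periodic average through the single global enumeration $(\mu_j)$ so that the ``first two parameters coincide'' matching between parent and child in Remark \ref{main-col} holds simultaneously at every node of $\mt$ while (R1) still yields a product bounded by $3$.
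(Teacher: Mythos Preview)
Your proposal is correct and follows essentially the same route as the paper: backward induction on the level of $\al\in\mt$, with the base case at level $n-1$ handled by Lemma~\ref{normbasicscc} and the inductive step at each interior node carried out via the ``moreover'' part of Proposition~\ref{main} (invoked through Remark~\ref{main-col}), yielding the recursive bound $\norm[x_\al]\leq(1+3\theta_{m_{\upsilon(\al)}})\max_{\beta\in\suc(\al)}\norm[x_\beta]$ and hence \eqref{normxan}. Your explicit verification that $f_\al(x_\al)=1$ at each node and your remarks on the parameter bookkeeping are more detailed than the paper's account, but the underlying argument is the same.
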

\begin{proof}
Existence follow by Remark \ref{main-col}. We shall prove the "Moreover" part inductively on the level of $\al\in\mt$. For  $\vvert[\al]=n-1$ we have that $x_{\al}=\theta_{m_{\upsilon(\al)}}^{-1}\sum_{i\in F_{\al}}a_{i}e_{i}$ is a seminormalized $(m_{\upsilon(\al)}, \e_{\al})$-basic special average with  $\e_{\al}\leq\theta_{m_{\upsilon(\al)}}^2$, thus by Lemma~\ref{normbasicscc} we finish the proof. 

Assume that  \eqref{normxan} holds for every $\beta\in\mt_{n}$ with $0<\vvert[\beta]=k\leq n-1$. Let $\al\in\mt_{n}$ with $\vvert[\al]=k-1$. Then we have that
$$
x_{\al}=\theta_{m_{\upsilon(\al)}}^{-1}\sum_{k=1}^{N_{\al}}\sum_{j=1}^{M_{\upsilon(\al)}}
a_{\al^{\frown}(k-1)M_{\upsilon(\al)}+j}x_{\al^{\frown}(k-1)M_{\upsilon(\al)}+j}
$$
The assumption on $(M_j)$ and condition on $\e_\al^n$ in Def.~\ref{RIS-tree-an} guarantee that the "moreover" part of Prop.~\ref{main} holds true, therefore
\begin{align*}
\norm[x_{\al}]&\leq (1+3\theta_{m_{\upsilon(\al)}})\prod_{j:\ \vvert[\mu_j]\geq\vvert[\al]+1}(1+3\theta_{m_j})
\\
&\leq\prod_{j:\ \vvert[\mu_j]\geq\vvert[\al]}(1+3\theta_{m_j}).
\end{align*}
In particular by (R1) we get $\norm[x]\leq 3$ and, as $f(x)=1$, $1/3\leq \norm[f]\leq 1$. 
\end{proof}

\br Similar conditions were considered in \cite{as} as a tool to prove seminormalization of the sequence of norming functionals defining the non-trivial strictly singular operator on Schlumprecht and Gowers-Maurey space.
\er

\section{Construction of the operator}
We recall now the general idea of constructing Banach spaces of various properties, as HI spaces, on the base of mixed Tsirelson spaces, started by W.T.Gowers and B.Maurey in $(\mc{A}_n)$ setting \cite{GM} and S.Argyros I.Deliyanni in $(\mc{S}_n)$ setting \cite{ad2}. 

Let $X=T[(\mc{F}_n,\theta_n)_n]$ be a regular mixed Tsirelson space, with $(\mc{F}_n)=(\mc{A}_n)$ or $(\mc{S}_n)$. We fix also sequence $(\rho_l)_{l\in L}\searrow 0$, with infinite $L\subset\N$, such that $\rho_l\geq \theta_l$ for any $l\in L$.

Let $\mc{W}=\{(f_1,\dots,f_k):\ f_1<\dots<f_k\in c_{00}(\Q), \norm[f_i]_\infty\leq 1,\ i=1,\dots, k, \ k\in\N\} $ and fix an injective function $\sigma:\mc{W}\to N$.
For any $D\subset c_{00}(\Q)$ define
\begin{align*}
D_n=&\left\{\theta_n\sum_{i=1}^kf_i: \ f_1,\dots,f_k\in D,  \  (f_1,\dots,f_k) \ \mc{F}_n\text{-admissible}, \ k\in\N\right\},  \ \  n\in\N \\
D_l^\sigma=&\left\{\rho_l\sum_{i=1}^kEf_i: \ f_1,\dots,f_k\in D, \  (f_1,\dots,f_k) \ (\sigma,\mc{F}_l)\text{-admissible}, \  E\subset\N \text{ interval}, \ k\in\N\right\},   \ \  l\in L
\end{align*}
where a block sequence $(f_1,\dots,f_k)$ is $(\sigma, \mc{F}_l)$-admissible, if $(f_1,\dots,f_k)$ is $\mc{F}_l$-admissible, $f_1\in\bigcup_{n\in\N}D_n$ and $f_{i+1}\in D_{\sigma(f_1,\dots,f_i)}$ for any $i<k$.

Consider a symmetric set $D\subset c_{00}(\Q)$ such that
\bnum
 \item $(\pm e_n^*)_n\subset D$,
 \item $D\subset\bigcup_{n\in N}D_n\cup\bigcup_{l\in L}D_l^\sigma$,
 \item $D_n\subset D$ for any $n\in N$.
\enum
Define a norm on $c_{00}$ as $\norm[x]_D=\sup\{f(x):f\in D\}$, $x\in c_{00}$, denote by $X_D$ the completion of $(c_{00},\norm[\cdot]_D)$. Obviously the u.v.b. $(e_n)$ is a basis for $X_D$. Moreover $X_D$ is reflexive in case of Schreier families, see for example \cite{ato}, as well as in case of families $(\mc{A}_n)$, provided  $\theta_n>\frac{1}{n}$  for at least one $n\in\N$.

\

The family of spaces described above include the classical mixed Tsirelson spaces $T[(\mc{F}_n,\theta_n)_n]$ (taking $L=\N$ and $\rho_n=\theta_n$ for example) as well as Gowers-Maurey space and asymptotic $\ell_1$ HI spaces introduced by S.Argyros and I.Deliyanni in \cite{ad2}.

The following lemma describes the only properties of $X_D$ we shall  need in the sequel. In fact our construction of the non-trivial strictly singular operator hold true for any space $Z$ not containing $c_0$ with the properties (D1), (D2), (D3) stated below.

\bl The space $X_D$ satisfies the following
\bnum
\item[(D1)] the dual unit ball in $X_D^*$ is closed under $(\mc{F}_n,\theta_n)$-operations on block sequences,
\item[(D2)] the u.v.b. of $X_D$ is $(\mc{F}_n)$-equivalent to the u.v.b. of $X$, i.e. there is $C\geq 1$ and an increasing sequence $(i_n)\subset\N$ such that for any $n$ and $i_n\leq F\in\mc{F}_n$ the sequences $(e_i)_{i\in F}$ in $X$ and $({e}_i)_{i\in F}$ in $X_D$ are $C$-equivalent,
\item[(D3)] for any $f\in X_{D}^{*}$ with $\norm[f]_D\leq 1$ and any $l\in L$ we have $\{n: |f(e_n)|\geq\rho_l\}\in\mc{F}_l$,
\enum
\el
Notice that in case of families $(\mc{A}_n)$ condition (D2) states that the spreading model of the u.v.b. of $X_D$ is $C$-equivalent to the u.v.b. of $X$.
\bp (D1) follows from (3) in definition of $D$. The property (D2) in case of Schreier families is proved in \cite{p} (proof of Cor. 4.7), the same reasoning works in $(\mc{A}_n)$ case, in case of Gowers-Maurey space (D2) was proved in \cite{as}. (D3) follows immediately from (2) in definition of $D$.
\ep

In order to construct a non-trivial strictly singular operator on $X_D$ under appriopriate conditions on $(\theta_n)$ we shall define first a core tree $\mc{R}$ with parameters $(m_j)$, $(M_j)$ satisfying conditions (R0), (R1), (R2) and additional ones stated below. Having this we take the block sequences $(x_n)$ ,$(f_{n})$ where  each $x_{n}$  is defined by a periodic RIS tree-analysis with core $\mc{R}$ of height $n$, and $f_n$ is associated to $x_n$. By Lemma~\ref{??} and properties of $X_D$ the functionals $(f_{n})$ are seminormalized and we show that for appropriate sequences $(r_{n}), (t_n)$ the operator $T$ defined by $T(x)=\sum_{n}f_{r_n}(x)e_{t_n}$ is a bounded strictly singular non-compact operator on $X_D$.

\

We consider first the case of spaces defined by Schreier families. Fix a sequence $(\theta_n)\searrow 0$ with $\theta_n\theta_k\leq\theta_{n+k}$ for any $n,k$ and  assume additionally  that there is some  some $c>0$  such that for every $k\in\N$ the following holds
\begin{equation}\label{cond}
\liminf_{n}\frac{\theta_{n+k}}{\theta_{n}}>c. 
\end{equation}
\br Notice that the above property is satisfied for example if $\theta_n^{1/n}\to 1$, as $n\to\infty$. 
\er
Fix a core tree $\mc{R}$ on $\N$. Given any $\mu_j\in\mc{R}$, $j>0$, let 
$$
I_j=\{\beta\in\mc{R}: \ |\beta|=|\mu_j|, \beta>_{lex}\mu_j\}\cup\bigcup_{|\beta|=|\mu_j|, \beta<_{lex}\mu_j}\suc \beta
$$
and $n_j=\# I_j$. 

Using condition \eqref{cond} pick an increasing sequence $(k_r)_r\subset\N$ with $\theta_{r+k}/\theta_k>c$ for any $k\geq k_r$ and $r\in\N$. 

Now consider a block sequence $(x_n)$ such that each $x_n$ has a periodic RIS tree-analysis $(x_\al^n)_{\al\in\mt_n}$ of height $n\in\N$ with core $\mc{R}$. We take the associated norming functionals  $(f_n)$ with a tree-analysis $(f_\al^n)_{\al\in\mt_n}$. Note that the functionals $(f_{n})$ belong also to the norming set of the space $X$.

\bl\label{2o} Let $r\in\N$, $\beta\in\mc{R}$ with $m_{\beta}\geq k_{r+\ord(\beta)}$. Then for any 
$F\in\mc{S}_{r}$ with $F>|\beta|$ we have
\begin{equation}
  \label{eq:2}
 \norm[\sum_{n\in F}\sum_{\al\in\mt_n,\upsilon(\alpha)=\beta}f_{\al}^{n}]\leq c^{-1}.\notag
\end{equation}
\el
\begin{proof}
Recall that each $f_\al^n$, $\upsilon(\al)=\mu_{\beta}$, $n>|\beta|$, is of the form $f_\al^n=\theta_{m_{\beta}}\sum_{\gamma\in\suc(\al)}f_{\gamma}^{n}$, with $(f_{\gamma}^{n})_{\gamma\in\suc(\al)}$ $\mc{S}_{m_{\beta}}$-admissible. On the other hand $(f_{\al}^n)_{ \upsilon(\al)=\mu_{\beta}}$ is $\mc{S}_{\ord(\mu_{\beta})}$-admissible by Remark \ref{rem} (3). Therefore 
$$
g=\theta_{m_{\beta}+\ord(\beta)+r} \sum_{n\in F} \sum_{\al\in\mt_n:\upsilon(\al)=\beta} \sum_{\gamma\in\suc(\al)}f_{\gamma}^n 
$$
is in the norming set of $X_D$ by (D1). Since $m_{\beta}>k_{r+\ord(\beta)}$ it follows that $\frac{\theta_{m_{\beta}}}{\theta_{m_{\beta}+\ord(\beta)+r} } \leq c^{-1}$. Therefore
$$
\sum_{n\in F}\sum_{\al\in\mt_n:\upsilon(\al)=\beta}f_\al^n=
\theta_{m_\beta}\sum_{n\in F}\sum_{\al\in\mt_n:\upsilon(\al)=\beta}\sum_{\gamma\in\suc(\al)}f_\gamma^n
=\frac{\theta_{m_\beta}}{\theta_{m_\beta +\ord(\beta)+r}}g
$$
and the result follows.
\end{proof}

\bl\label{2}  Fix $j,r\in\N$, $j>0$. Assume that for any $\beta\in I_j$ we have $m_\beta\geq k_{r+\ord(\beta)}$. Then for any $F\in\mc{S}_r$ with $F> |\mu_j|$  we have
$$
\norm[\sum_{n\in F}f_n-\sum_{n\in F}\sum_{\upsilon(\al)=\mu_j}c_jf_{\al}^n]\leq \frac{n_j}{c}
$$
\el
\bp
For every  $\beta\in I_j$  Lemma~\ref{2o} yields
\begin{equation}
  \label{eq:5}
\norm[\sum_{n\in F}\sum_{\al\in\mt_n:\upsilon(\al)=\beta}f_\al^n]\leq c^{-1}.  
\end{equation}
As each $f_n$, $n>j$ is of the form 
$$
f_n=\sum_{\beta\in I_j}\sum_{\al\in\mt_n: \upsilon(\al)=\beta}c_\beta f_\al^n+\sum_{\al\in\mt_n:\upsilon(\al)=\mu_j}c_j f^n_{\al},
$$ 
with each $c_\beta\in (0,1)$, using \eqref{eq:5} we get
\begin{align*}
\norm[\sum_{n\in F}f_n-\sum_{n\in F}\sum_{\upsilon(\al)=\mu_j}c_jf_{\al}^n]&=\norm[\sum_{\beta\in I_j}\sum_{n\in F}\sum_{\al\in\mt_n:\upsilon(\al)=\beta}c_\beta f^n_\al]\\
&\leq \sum_{\beta\in I_j}\norm[\sum_{n\in F}\sum_{\al\in \mt_n: \upsilon(\al)=\beta}f_\al^n]\leq \frac{n_j}{c}
\end{align*}
and we finish the proof. 
\ep
\br\label{rem2} 
These Lemmas show the crucial technical difference between $(\mc{A}_n)$ and $(\mc{S}_n)$ cases. In $(\mc{A}_n)$ case we are able to produce a suitable sequence $(x_n)$ with a tree-analysis $(x_\al)_{\al\in\mt_n}$ modeled on some core tree $\mc{R}$ in a such a way, that on a fixed level of each tree $\mt_n$ we have the same number of nodes and the nodes form a RIS. 

To achieve analogous situation in the spaces defined by Schreier families, we have to introduce periodic repetition in the structure of RIS, cf. Remark \ref{rem-z0}. Recall that in the periodic RIS tree-analysis with a fixed core we allow (as it is inevitable in case of Schreier families) the number of nodes of any fixed level of trees $\mt_n$ to increase as $n\to \infty$. However, we keep the same number of weights at each fixed level, i.e. $\{w(f_\al^n): \al\in\mt_n,\ |\al|=l\}=\{m_\beta:\ \beta\in\mc{R}, \ |\beta|=l\}$ for any $n\in\N$ and any fixed level $l$. 

With this property we are able to prove Lemma \ref{2},  crucial for the estimation providing strict singularity of the constructed operator. On the other hand Lemma \ref{2o} ensures that any functionals related to the same node in $\mc{R}$ produces a $c_0$-average. This approach allows us to obtain tight estimates of norms of sums of considered functionals.
\er
With this preparation we are ready to prove the following theorem, whose proof extends the method of Prop. \ref{adt}.
\bt Let a sequence $(\theta_n)\searrow 0$ with $\theta_n\theta_k\leq\theta_{n+k}$, $n,k\in\N$, satisfy the following condition for some $c>0$
$$
\liminf_{n}\frac{\theta_{n+k}}{\theta_{n}}>c\ \ \ \text{ for any }\ \  k\in\N\,.
$$
Then there is a bounded strictly singular non-compact operator $T$ on the space $X_D$ defined by the families $(\mc{S}_n)$. 
\et
\bp We pick a core tree $\mc{R}$ with parameters $(m_j)_{j\geq 0}$, $(M_j)_{j\geq 0}$ and an increasing sequence $(r_j)_{j>0}\subset L$ inductively on $j$ to satisfy (R0), (R1), (R2) and the following 
\begin{enumerate}
 \item[(R3)] $\rho_{r_j}(\sum_{i<j}M_i+\sum_{i<j}m_{i})<\frac{1}{2^j}$, $j\in\N$,
 \item[(R4)] $k_{r_j+\ord(\mu_j)}\leq m_{j}$, $j\in\N$.
\end{enumerate}
We describe the inductive construction: we choose freely $m_0,m_1,r_1$ and $M_0,M_1$ satisfying (R2). Fix $j\in\N$, $j\geq 2$ and assume we have defined $m_i$ with $i<j$. Then we have also $(M_i)_{i<j}$ and 
$\ord(\mu_j)$ and we choose $r_j$ to satisfy (R3) and $m_j$ to satisfy (R0), (R1) and (R4). Then we choose $M_j$ to satisfy (R2).

\

By Lemma \ref{??} there is a block sequence $(x_n)\subset X$ with a tree-analysis with core $\mc{R}$ and the block sequence of associated norming functionals $(f_n)\subset X^*$ so that $i_{p_n}\leq x_n$, where $\supp x_n\in\mc{S}_{p_n}$ for each $n\in\N$ (see Remark \ref{rem} (4)). Thus, with the abuse of notation, by $(\mc{S}_n)$-equivalence of the basis in $X$ and $X_D$, we shall treat vectors $(x_n)$ as elements of $X_D$ and $(f_n)$ as elements of $X_D^*$.

\bcl\label{3} With the above notation the following hold 
\begin{enumerate}
\item[(F1)] the sequence $(f_n)$ is seminormalized in $X_D$,
  \item[(F2)] for any $j\in\N$, any  $F\in\mc{S}_{r_{j+1}}$ with $j+1\leq F$ we have 
$$
\norm[\sum_{n\in F}f_n-\sum_{n\in F}\sum_{\al\in\mt_n:\upsilon(\al)=\mu_j}c_jf^n_\al]_D\leq \frac{n_j}{c}\,,
$$
\item[(F3)] for any $j\in\N$ and $n\in\N$ with $n\geq |\mu_j|+2$ we have
$$
\norm[\sum_{\al\in\mt_n, \upsilon(\al)=\mu_j}f_\al^n]_D\leq \frac{1}{c}\,.
$$
\end{enumerate}
\ecl
To show the Claim apply Lemma~\ref{??} in $X$, obtaining $\norm[f_n]\geq 1/3$ for any $n\in\N$. Therefore by the condition on the supports of $x_n$ and (D2) we obtain $\norm[f_n]_D\geq 1/3C$, $n\in\N$.  
The condition (R4) implies that for any $j>0$ we have $m_\beta\geq k_{r_{j+1}+\ord(\beta)}$ for any $\beta\in I_j$ (as $\min_{lex}I_j=\mu_{j+1}$ and $(k_r)$ increases), therefore (F2) follows by Lemma \ref{2} as  $|\mu_j|\leq j$. To prove (F3) use Lemma \ref{2o} for $r=0$. 

\br 
The above Claim shows why we cannot apply directly Prop. \ref{adt} here; while assumption (1) in Prop. \ref{adt} has its counterpart in property (D3), instead of assumption (2) of Prop. \ref{adt} we have (F2) and (F3), which requires additional estimations. 
\er

We continue the proof of the theorem.

For any $n\in\N$ let $g_n=f_{r_{n+1}}$. We define an operator $Tx=\sum_{n\in\N}g_n(x)e_{t_n}$ on any $x\in X_D$ with finite support, where $t_n=i_{r_{n+1}}$ for each $n\in\N$. We shall show that $T$ is bounded and strictly singular. Notice that $T$ is non-compact, as the sequence $(f_n)$ is seminormalized by (F1). 

We take any $x\in X_D$ with a finite support and a norming functional $f$ with $f(Tx)=\norm[Tx]$. Let for any $j$ 
$$
B_j=\{ n\in\N:\ \rho_{r_{j+1}}<|f(e_{t_n})|\leq \rho_{r_j}\}, \ \ \  D_j=B_j\cap\{1,\dots, \sum_{i<j}M_i+\sum_{i<j}m_i\}, 
$$ 
For any $n\in B_j\setminus D_j$ let $h_n=g_n-c_ju_n$, where $u_n=\sum_{\al\in\mt_n:\upsilon(\al)=\mu_j}f^{r_{n+1}}_\al$. For any $j\in\N$ put
\begin{align*}
C_j^1&=\{n\in B_j\setminus D_j:\ \ h_n(x)\geq 0,\ f(e_{t_n})\geq 0\}\\
C_j^2&=\{n\in B_j\setminus D_j:\ \ h_n(x)< 0,\ f(e_{t_n})\geq 0\}\\
C_j^3&=\{n\in B_j\setminus D_j:\ \ h_n(x)\geq 0,\ f(e_{t_n})<0\}\\
C_j^4&=\{n\in B_j\setminus D_j:\ \ h_n(x)< 0,\  f(e_{t_n})<0\}\\
\end{align*}
Notice that each set $\{i_{r_{n+1}}:\ n\in C_j^k\}\in\mc{S}_{r_{j+1}} $ with $\{i_{r_{n+1}}: n\in  C_j^k\}\geq i_{r_{j+1}}$ for each $j$ by (D3). 

Now compute 
\begin{align}\label{parts}
\norm[Tx]_D&\leq \sum_{j=1}^{j_0-1}|\sum_{n\in B_j}g_n(x)f(e_{t_n})|+\sum_{j=j_0}^\infty|\sum_{n\in D_j}g_n(x)f(e_{t_n})|\\
&+\sum_{j=j_0}^\infty |\sum_{n\in B_j\setminus D_j}h_n(x)f(e_{t_n})|+\norm[\sum_{j=j_0}^\infty c_j\sum_{n\in B_j\setminus D_j}u_n(x)e_{t_n}]_D\,.\notag
\end{align}
Estimate the second term by (R3)
$$
\sum_{j=j_0}^\infty|\sum_{n\in D_j}g_n(x)f(e_{t_n})|\leq  \sum_{j=j_0}^\infty \rho_{r_j}(\sum_{i<j}M_i+\sum_{i<j}m_i)\norm[x]_D\leq \sum_{j=j_0}^\infty\frac{1}{2^j}\norm[x]_D
$$
Estimate the third term by Lemma \ref{2}
\begin{align*}
\sum_{j=j_0}^\infty |\sum_{n\in B_j\setminus D_j}h_n(x)f(e_{t_n})|&\leq \sum_{j=j_0}^\infty\sum_{k=1}^4|\sum_{n\in C_j^k}h_n(x)f(e_{t_n})|\\
& \leq \sum_{j=j_0}^\infty\sum_{k=1}^4|\sum_{n\in C_j^k}h_n(x)|\max_{n\in C_j^k}|f(e_{t_n})| & \text{ by (F2)} \\
& \leq \sum_{j=j_0}^\infty\frac{4}{c}\rho_{r_j} n_j\norm[x]_D\leq \frac{4}{c}\sum_{j=j_0}^\infty\frac{1}{2^j}\norm[x]_D\,.
\end{align*}
where the last inequality follows from (R3) and the fact that $n_j\leq \sum_{i<j}M_i$.

In order to estimate the last part pick $k_0$ such that $\mu_{j_0}\in\suc(\mu_{k_0})$ and compute
\begin{align*}
\norm[\sum_{j=j_0}^\infty c_j\sum_{n\in B_j\setminus D_j}u_n(x)e_{t_n}]_D\leq \sum_{k=k_0}^\infty c_k\theta_{m_k}\norm[\sum_{\mu_j\in\suc(\mu_k)}\sum_{n\in B_j\setminus D_j}u_n(x)e_{t_n}]_D\leq\dots
\end{align*}
Notice that $F_k=\{i_{r_{n+1}}:\ n\in B_j\setminus D_j, \ \mu_j\in\suc(\mu_k)\}\in \mc{S}_{r_{j_k+1}+m_k}$ for each $k\in\N$, where $j_k=\max\{j:\ \mu_j\in\suc(\mu_k)\}$. By definition of $D_j$ we have also $F_k\geq i_{r_{j_{k+1}}+m_k}$. Therefore we continue 
\begin{align*}
\dots&\leq C\sum_{k=k_0}^\infty\theta_{m_k}\norm[\sum_{\mu_j\in\suc(\mu_k)}\sum_{n\in B_j\setminus D_j}u_n(x)e_{t_n}] \quad &  \text{by (D2)}\\
&\leq \frac{C}{c}\sum_{k=k_0}^\infty\theta_{m_k}\norm[x]\quad &  \text{by Fact \ref{fact1} and (F3)}\\
&\leq \frac{C}{c}\sum_{k=k_0}^\infty\frac{1}{2^k}\norm[x]_D \quad & \text{by (D1) and (R1)}. 
\end{align*}
To show boundedness of $T$ take $j_0=1$.  The first term of \eqref{parts} does not appear and the result follows from the above inqualities.

To show strict singularity we have to handle the first term of estimation \eqref{parts}. Consider the norm $\norm_G$, with $G=\{g_n: \ n\in\N\}$ and $\mc{F}=\mc{S}_{r_{j_0+1}}$ defined as in Remark \ref{ps}. As  $\norm_{G}\leq \theta_{r_{j_0+1}}^{-1}\norm_{D}$ by (D1) and $X_{D}$ is reflexive it follows that for any $\e>0$ in any block subspace of $X_D$ there is a vector $x$ of norm 1 such that $\norm[x]_{G}<\e$. 

Assume the vector $x\in X_D$ satisfies $\norm[x]_D=1$ and $\norm[x]_{G}\leq \frac{1}{j_{0}^{2} } $. As in the estimation \eqref{g-norm} we obtain
\begin{align*}
\sum_{j=1}^{j_0-1}|\sum_{n\in B_j}g_n(x)f(e_{t_n})| &\leq \sum_{j=1}^{j_0-1} \sum_{n\in B_j}|g_n(x)|\,|f(e_{t_n})|\leq \frac{1}{j_0}
 \end{align*}
Putting together all the above estimates we obtain
\begin{align*}
\norm[Tx]_D& \leq 
\frac{1}{j_0}+\frac{8}{c2^{j_0-1}}+\frac{C}{c2^{k_0-1}}\,.
\end{align*}
Since we can pick such vector $x$ for any $j_0$ and in any block subspace of $X_D$, and $k_0\to \infty$ as $j_0\to\infty$,  we finish the proof of strict singularity of $T$.
\ep
\br
In the case the space $X_{D}$ is an HI space  we can prove the strictly singularity of $T$ using that its  kernel  is infinite dimensional. Indeed for the complex case it follows by Gowers-Maurey result for the operators in complex  HI spaces \cite{GM}, while for the real case it follows from Argyros-Tolias result \cite{Atol}. 
\er

Now we discuss the case of families $(\mc{A}_n)$. 

\bt Let a sequence $(\theta_n)\searrow 0$ with $\theta_n\theta_k\leq\theta_{nk}$, $n,k\in\N$, satisfy $\theta_nn^a\to \infty$ for any $a>0$. 

Then there is a bounded strictly singular non-compact operator $T$ on the $X_D$ defined by the families $(\mc{A}_n)$. 
\et

The proof goes analogously and is much simplified, as we are able to use only usual averages, without taking periodic repetition of the RIS structure, cf. Remark \ref{rem2}. For this reason we shall not repeat the reasoning, but indicate the technical differences. 

Notice first the following
\begin{fact} Let the sequence $(\theta_n)$ satisfy the assumption of the Theorem. Then for any $k\in\N$ we have
 $$
\liminf_n\frac{\theta_{nk}}{\theta_n}=1
$$
\end{fact}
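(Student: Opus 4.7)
The plan is to take logarithms and apply Fekete's subadditive lemma. Setting $a_n:=-\log\theta_n$, the hypothesis $\theta_n\theta_k\leq\theta_{nk}$ becomes the multiplicative subadditivity $a_{nk}\leq a_n+a_k$; monotonicity $\theta_n\searrow 0$ becomes $a_n\nearrow\infty$; and the Schlumprecht-type decay $\theta_n n^\alpha\to\infty$ for every $\alpha>0$ translates to $a_n=o(\log n)$. Since $(\theta_n)$ is non-increasing, $\theta_{nk}/\theta_n\leq 1$ automatically, so the task reduces to proving $\limsup_n(a_{nk}-a_n)=0$.

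Fix $k\in\N$ and consider the sequence $b_m:=a_{k^m}$, which is subadditive in $m$: $b_{m+\ell}\leq b_m+b_\ell$. Fekete's lemma gives $b_m/m\to\inf_m b_m/m$, and $a_n=o(\log n)$ forces $b_m=o(m\log k)$, so this infimum is $0$ and $b_m/m\to 0$. Telescoping $b_M=\sum_{m<M}(b_{m+1}-b_m)$ with non-negative summands of average $o(1)$ forces $\liminf_m(b_{m+1}-b_m)=0$; applying the same averaging to pairs of consecutive increments produces a subsequence $(m_j)$ along which $a_{k^{m_j+2}}-a_{k^{m_j}}\to 0$, i.e.\ $\theta_{k^{m_j+2}}/\theta_{k^{m_j}}\to 1$. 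To propagate this to arbitrary $n$, I would invoke monotonicity of $(\theta_n)$: for $k^{m_j}\leq n<k^{m_j+1}$ one has $nk\leq k^{m_j+2}$, whence $\theta_{nk}/\theta_n\geq\theta_{k^{m_j+2}}/\theta_{k^{m_j}}$, yielding the lower bound on the ratio on each such window.

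The main obstacle is the usual gap between averaging arguments, which produce only subsequential statements, and the $\liminf$ claim, which requires control for all sufficiently large $n$. My strategy to close this gap is to combine the subsequential control at $(k^{m_j})_j$ with the monotonicity of $(\theta_n)$ on each window $[k^{m_j},k^{m_j+1})$: any $n$ in such a window has its ratio sandwiched between two ratios along the subsequence, and the sub-logarithmic budget $a_n=o(\log n)$ prevents a positive density of windows on which the ratio stays bounded away from $1$. This matches the Schlumprecht prototype $\theta_n=1/\log_2(n+1)$, where $\theta_{nk}/\theta_n\to 1$ follows by direct computation.
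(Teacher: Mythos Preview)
Your reduction is correct: since $\theta_{nk}/\theta_n\le 1$ always, $\liminf_n\theta_{nk}/\theta_n=1$ is equivalent to the full limit being $1$, i.e.\ to $a_{nk}-a_n\to 0$. But the Fekete route does not deliver this. From $b_m/m\to 0$ with nonnegative increments you only extract $\liminf_m(b_{m+1}-b_m)=0$, and your windowing then transfers this to control of $a_{nk}-a_n$ on the blocks $[k^{m_j},k^{m_j+1})$ for a \emph{subsequence} $(m_j)$. That is still subsequential information; the claim requires every large $n$. Your closing paragraph concedes this and offers a density heuristic, but ``the sub-logarithmic budget prevents a positive density of bad windows'' is not a proof, and it cannot be upgraded to one without new input: nonnegative subadditive sequences with $b_m/m\to 0$ may well satisfy $\limsup_m(b_{m+1}-b_m)>0$ (e.g.\ $b_m=\lfloor\sqrt m\rfloor$), so averaging alone cannot force the increments to vanish.

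The paper bypasses all of this with a one-line parametrization. Write $\theta_n=n^{-1/q_n}$; the hypothesis $\theta_n n^a\to\infty$ for every $a>0$ says exactly that $1/q_n\to 0$. The paper records $q_n\nearrow\infty$, and then
\[
\frac{\theta_{nk}}{\theta_n}=\frac{n^{1/q_n}}{(nk)^{1/q_{nk}}}=\frac{n^{1/q_n-1/q_{nk}}}{k^{1/q_{nk}}}\ge\frac{1}{k^{1/q_{nk}}}\longrightarrow 1,
\]
since $q_{nk}\ge q_n$ makes the numerator at least $1$ and $q_{nk}\to\infty$ kills the denominator. The idea you are missing is precisely this change of variables: the growth condition is a statement about the \emph{exponent} $1/q_n$, and working with that exponent makes the ratio estimate immediate, whereas your subadditivity framework sees only the values $a_n=-\log\theta_n$ and loses the leverage needed to control $a_{nk}-a_n$ for all $n$.
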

To prove the Fact notice that with the assumption on $(\theta_n)$ can write $\theta_n$ as 
$\theta_n=1/n^{1/q_n}$, $n\in\N$, with $q_n\nearrow \infty$. Thus 
$$
\frac{\theta_{nk}}{\theta_n}=\frac{n^{1/q_{n}}}{(nk)^{1/q_{nk}}}=\frac{n^{1/q_{n}-1/q_{nk}}}{k^{1/q_{nk}}}\geq \frac{1}{k^{1/q_{nk}}}\to 1,  \text{ as }n\to\infty
$$
We take now a core tree $\mc{R}$ with parameters $(m_j)$ and $(M_j)=(m_j)$ and an increasing sequence $(r_j)$ satisfying suitably modified conditions for any $j\in\N$: 
\begin{enumerate}
\item[(R0)] $m_{j+1}\geq w(\theta_{m_j}^3/4,q_j)$ with some $q_j\in\N$ satisfying $\theta_{q_j}\leq \theta_{m_j}^4$ for any $j\in\N$,
\item[(R1)] $\theta_{m_j}<\frac{1}{2^{j+1}}$,
\item[(R2)] $m_j>4(1-\theta_{1})^{-1}\theta_{m_j}^{-4}$,
 \item[(R3)] $ \theta_{r_j}(\sum_{i<j}m_{i})< \frac{1}{2^j}$,
\item[(R4)]  $k_{r_j}\leq m_{j} $.
\end{enumerate}
We describe the inductive construction. As by the assumption $\theta_nn^{1/4}\to 0$ picking $(m_j)$ at each step large enough we ensure (R2).  We choose freely $m_0,m_1,r_1$. Fix $j\in\N$, $j\geq 2$ and assume we have defined $m_i$ with $i<j$. Then we have $\ord(\mu_j)$ and we choose $r_j$ to satisfy (R3) and $m_j$ to satisfy(R1) and  (R4). 

We take a block sequence $(x_n)$ with a RIS tree-analysis $(x_\al^n)_{\al\in\mt_n}$ of the simplest form: for any $\al\in\mt_n$ we have $N_\al=1$. According to the Remark \ref{rem} (2) we can regard each tree $\mt_n$ as $\mc{R}\cap\cup_{i=0}^n\N^i$. 

Lemma~\ref{2} takes a simpler form in such a situation. We pick an increasing sequence $(k_r)_r\subset\N$ such that $\theta_{rk}/\theta_k>1/2$ for any $k\geq k_r$ and any $r\in\N$. Then given $j,r\in\N$, $j>0$ with $m_\beta\geq k_r$ for any $\beta\in I_j$ we have  for any $F\subset\N$ with $F\geq j$, $\# F\leq r$ the following
$$
\norm[\sum_{n\in F}f_n-\sum_{n\in F}c_{\mu_j}^nf_{\mu_j}^n]\leq 2n_j\,.
$$
The rest of the reasoning in proof of the theorem above follows straightforward. 
\begin{remark}
  In a recent work S.A. Argyros and P. Motakis, \cite{amot}, present an example of a reflexive  HI Banach space $X_{ISP}$, built on the Tsirelson space, with the property that for every three strictly singular operators $T_{1},T_{2},T_{3}$ on $X_{ISP}$ the composition operator  $T_{1}T_{2}T_{3}$ is compact. This yields  that every strictly singular operator on $X_{ISP}$ has an invariant subspace. The above result leads to the following question.

Let  $X$ be an HI Banach space  built on a mixed Tsirelson space defined by the Schreier families $(\mc{S}_{n})$. Does there exists $n\in\N$ such that for every strictly singular  operators  $T_1,\dots,T_n$ the operator $T_1\dots T_n$ is compact?

\end{remark}

\end{document}